\documentclass[a4paper,12pt,onecolumn,twoside]{article}
\usepackage{mathrsfs}
\usepackage{fancyhdr}
\usepackage{amsmath,amsfonts,amssymb,graphicx,amsthm}
\allowdisplaybreaks
\usepackage{color}
\usepackage{upgreek} % 希腊字母转正体, 命令为 \upalpha, \upbeta
\usepackage{hyperref} %“超链接引擎”
\usepackage{subfigure} %%图形或表格并排排列
\newcommand{\pll}{\kern 0.56em/\kern -0.8em /\kern 0.56em}  % 定义平行
\usepackage{indentfirst} % latex默认section后首行无缩进，这个宏包可以缩进
\usepackage{tikz} 
\usetikzlibrary{math,calc,intersections,through,backgrounds,animations,arrows,patterns,decorations.pathmorphing,decorations.pathreplacing,decorations.shapes,quotes,angles}  %Tikz 程序库，可选
\tikzset{global scale/.style={
    scale=#1,
    every node/.append style={scale=#1}
  }
}  % 在 tikzpicture 环境或者 \tikz 命令的参数中，可以通过 scale 选项来缩放绘制的图形。然而，这种缩放不会同步应用在 node 中的文字上，导致图形与 node 中的文字注解大小失衡。 因此定义一个global scale

\numberwithin{equation}{section}

 \newtheorem{lemma}{Lemma}[section]
 \newtheorem{proposition}[lemma]{Proposition}
 \newtheorem{theorem}{Theorem}[section]
 
 \theoremstyle{remark}
 \newtheorem{remark}{Remark}[section]

\numberwithin{equation}{section}

\begin{document}

%--------------------------------------------------------------------------------------
%--------------------------------------------------------------------------------------
\title{\bf Gradient Catastrophe for Solutions to the Hyperbolic Navier-Stokes Equations}

%\author[a,b]{Huijiang Zhao\thanks{Email address: hhjjzhao@hotmail.com}}
%\author[a]{Qingsong Zhao\thanks{Corresponding author. Email: qszhao@whu.edu.cn}}
%\affil[a]{\small School of Mathematics and Statistics, Wuhan University, Wuhan 430072, China}
%\affil[b]{\small Computational Science Hubei Key Laboratory, Wuhan University, Wuhan 430072, China}
%

\author{{\sc Qingsong Zhao}\thanks{School of Mathematics and Science, Nanyang Institute of Technology, Nanyang 473004, China. Email: qqsszhao@nyist.edu.cn}}

\date{}

\maketitle
\begin{abstract}
This paper studies local existence and the singularity formation of the solutions of the one-dimensional hyperbolic Navier–Stokes equations, in particular proving the gradient blow-up of the derivatives of the solutions. The underlying model introduces a relaxation mechanism that leads to hyperbolization, achieved both through a nonlinear Cattaneo law for heat conduction and through Maxwell‑type constitutive relations for the stress tensor. Our main approach is to prove that the hyperbolic Navier–Stokes equations are indeed hyperbolic, and to prove that they possess two genuinely nonlinear eigenvalues, thereby establishing the blow-up of the gradient of the solution. In addition, we provide a derivation of the equation of state for the hyperbolic Navier–Stokes equations in the appendix.
\\[2mm]
\noindent{\sc Key words:} Hyperbolic Navier-Stokes equations, Cattaneo's law, Maxwell flow, blow-up
\\[2mm]
\noindent{\sc AMS Subject Classification:} 35L60, 35B44, 76N06
\end{abstract}

\maketitle

\tableofcontents

%---------------------------------------------------------------------------------------
\section{Introduction}

The fundamental governing equations of fluid mechanics—the Navier-Stokes equations—are of fundamental importance in science and engineering. However, the classical form of these equations exhibits a well-known physical inconsistency: they predict that disturbances in pressure and temperature propagate at an infinite speed \cite{Abdelhedi}. This unphysical characteristic stems from their underlying constitutive relations, particularly Fourier's law of heat conduction. As a parabolic equation, Fourier's law implies an instantaneous relationship between the heat flux response and the temperature gradient.

To incorporate the finite-speed signal propagation observed in the physical world into mathematical models, researchers have proposed various modifications. A prominent approach involves introducing a `relaxation time' ($\tau$). This replaces the algebraic constitutive equation (parabolic type) with a differential equation containing a time derivative, thereby transforming the system into a hyperbolic or mixed parabolic-hyperbolic type (see \cite{Abdelhedi,Coulaud}). The modified system is often referred to as the hyperbolic Navier-Stokes equations or hyperbolized Navier-Stokes equations. In the one-dimensional case, these equations are given by 
\begin{subequations}\label{HNS}
\begin{align}
v_t-u_x&=0,  \label{HNS-a}\\
u_t+p_x&=S_x, \label{HNS-b}\\
\left(e+\frac12u^2\right)_t+(up)_x+q_x&=(uS)_x, \label{HNS-c}\\
%e_t+pu_x+q_x&=Su_x, \label{HNS-c}\\
\tau_1 q_t+q+\frac{\kappa\theta_x}{v}&=0, \label{HNS-d}\\
\tau_2 S_t+S&=\frac{\mu u_x}{v}. \label{HNS-e}
\end{align}
\end{subequations}
Here $x\in \mathbb{R}$ is the Lagrangian space variable, $t\in \mathbb{R}^+$ is the time variable. The primary dependent variables are specific volume $v$, fluid velocity $u$, absolute temperature $\theta,$ stress tensor $S$ and heat flux $q$. The heat conductivity coefficient $\kappa$ and viscous coefficient $\mu$ are positive constants. The positive constants $\tau_1$ and $\tau_2$ are relaxation parameters.

In the hyperbolic Navier-Stokes equations \eqref{HNS}, the first three equations \eqref{HNS-a}-\eqref{HNS-c} are conservation laws, encompassing mass conservation, momentum conservation, and energy conservation. To overcome the paradox of infinite propagation speed, Cattaneo proposed a revolutionary idea in the mid-20th century: introducing a tiny relaxation time $\tau_1$ to modify Fourier's law $q=-\frac{\kappa\theta_x}{v}$ into a relaxation equation that includes a time derivative, namely Cattaneo's law \eqref{HNS-d}. Similarly, in terms of fluid viscosity, the Maxwell fluid model \eqref{HNS-e} modifies the Newtonian fluid model $S=\frac{\mu u_x}{v}$ by introducing a relaxation process for the stress tensor $S$.

 The five thermodynamic quantities: specific internal energy $e$, specific volume $v$, absolute temperature $\theta$, entropy $s$, and pressure $p$ are related by the Gibbs equation
\begin{equation}\label{Gibbs}
 \theta\mathrm{d}s=\mathrm{d}e+p\mathrm{d}v.
\end{equation}

We assume that the pressure $p$ and the specific internal energy $e$, as a function of $(v,\theta,q,S),$ satisfy
\begin{align}
p(v,\theta,q,S)&=\tilde{p}(v,\theta)-\frac{\tau_1}{2\kappa\theta}q^2-\frac{\tau_2}{2\mu}S^2, \label{p-}\\
e(v,\theta,q,S)&=\tilde{e}(v,\theta)+\frac{\tau_1 v}{\kappa\theta}q^2+\frac{\tau_2v}{2\mu}S^2, \label{e-}
\end{align}
such that 
\begin{equation}\label{gibbs}
e_v(v,\theta,q,S)=\theta(v,\theta,q,S) p_\theta(v,\theta,q,S)-p(v,\theta,q,S)
\end{equation}
holds. Here \eqref{gibbs} follows from \eqref{Gibbs}. The functions $\tilde{p}(v,\theta)$ and $\tilde{e}(v,\theta)$ are the classic expressions of pressure and internal energy as functions of $(v,\theta)$ with relaxation parameters $\tau_1=\tau_2=0.$ In Appendix B, we will provide a detailed interpretation of the expressions $p(v,\theta,q,S)$ in \eqref{p-} and $e(v,\theta,q,S)$ in \eqref{e-}. Interested readers may also refer to reference \cite{Chen-ZAMP-1970,Coleman-ARMA-1986,Coleman-ARMA-1986-2,Tarabek-QAM-1992}.
Furthermore, since under general conditions, pressure and density are positively correlated in a closed system at constant temperature, and internal energy and temperature are positively correlated under constant volume, we have hypothesis
\begin{equation}\label{hypothesis}
p_v(v,\theta,q,S)<0,\quad\quad\quad e_\theta(v,\theta,q,S)>0.
\end{equation}

For the hyperbolic Navier-Stokes equations \eqref{HNS}, what is particularly striking is how the blow-up result sharply contrasts with the relaxation‑free setting ($\tau_1=\tau_2=0$). In the classical compressible Navier–Stokes system ($\tau_1=\tau_2=0$), global large solutions are known to exist (see Kazhikhov \cite{Kazhikhov-JAMM-1977,Kazhikhov-SMJ-1982}, Jiang \cite{Jiang-1999,Jiang-2002}, Li and Liang \cite{Li-Liang-2016}), making the contrast all the more significant. The Cauchy problem for system \eqref{HNS} and its multi‑dimensional analogue has received extensive attention. For the case $\tau_1=0, \tau_2\neq 0,$ for which the Maxwell fluid model \eqref{HNS-e} holds, the isentropic Navier–Stokes equations with the revised linear Maxwell law  were first taken up by Yong [32]. His work provided a local well‑posedness theory and identified the local relaxation limit. This result was then extended by Hu and Racke \cite{Hu-JMFM-2017} to the non-isentropic case and by Peng \cite{Peng-PANL-2021} to a more general setting. Hu and Wang \cite{Hu-Wang-AML-2020,Hu-Wang-MN-2019} as well as B\"{a}rlin \cite{Barlin-2022} investigated the occurrence of blow‑up. For the case $\tau_1\neq 0, \tau_2\neq 0,$ for which the Cattaneo's law \eqref{HNS-d} and the Maxwell fluid model \eqref{HNS-e} hold, global existence of smooth solutions for small initial data was established in by Hu and Racke in \cite{Hu-JDE-2020}. Meanwhile, Hu, Racke and Wang in \cite{Hu-JDE-2022} obtained a blow‑up result for large data.

Blow-up results for fluid dynamics equations can generally be classified into two types:
\begin{enumerate}
\item [(i).] The gradient of the solution blows up in finite time (also referred to as ``gradient catastrophe"). The main idea is to choose appropriate initial data so that the gradient of the Riemann invariant blows up, see \cite{Hormander,John-JDE-1979,Wang-Chen-JDE-1998}. The advantage of this approach is that it reveals the blow-up mechanism, i.e., the formation mechanism of singularities in the solution;
\item [(ii).] Sideris-type blow-up results. The core method relies on the finite speed of propagation of solutions to hyperbolic conservation laws, where certain functionals are constructed to prove that the solution cannot be extended globally in time , see \cite{Hu-Racke-SIMA-2023,Hu-JDE-2022,Sideris-CMP-1985}. Compared with the first type of blow-up result, this approach does not reveal the mechanism of singularity formation.
\end{enumerate}

Since the blow-up results in \cite{Hu-Racke-SIMA-2023,Hu-JDE-2022} are of Sideris-type and do not provide the mechanism of singularity formation, a natural question arises: {\it For the hyperbolic Navier-Stokes equations \eqref{HNS}, can one establish a blow-up mechanism for non-vacuum solutions, i.e., prove that the gradient of the solution blows up?}

The main purpose of this paper is devoted to such a problem. We consider the Cauchy problem for the functions 
\begin{equation*}
(v,u,\theta,q,S):[0,+\infty)\times\mathbb{R}\longrightarrow \mathbb{R}_+\times \mathbb{R}\times\mathbb{R}_+\times\mathbb{R}\times\mathbb{R}
\end{equation*}
with prescribed initial data
\begin{equation}\label{initial}
\left(v(0,x),u(0,x),\theta(0,x),q(0,x),S(0,x)\right)=\left(v_0(x),u_0(x),\theta_0(x),q_0(x),S_0(x)\right).
\end{equation}
Furthermore, it is assumed to satisfy the following far‑field conditions
\begin{equation}\label{far-field}
\lim_{|x|\to\infty}\left(v_0(x),u_0(x),\theta_0(x),q_0(x),S_0(x)\right)=(v_\pm,u_\pm,\theta_\pm,q_\pm,S_\pm).
\end{equation}
This paper focuses on the case where the far‑field states of the initial data $\left(v_0(x),u_0(x),\theta_0(x),q_0(x),S_0(x)\right)$ are equal, i.e., 
\begin{equation}\label{far-field-}
(v_-,u_-,\theta_-,q_-,S_-)=(v_+,u_+,\theta_+,q_+,S_+).
\end{equation}
Without loss of generality, we may normalize these constants to $v_\pm=1,$ $u_\pm=0,$ $\theta_\pm=1,$ $q_\pm=0$ and $S_\pm=0$
throughout the remainder of this paper.

Now we turn to state our main results. 

\begin{theorem}\label{thm:local}
Suppose that the pressure $p$ and the specific internal energy $e $ satisfy \eqref{p-}-\eqref{hypothesis}. Then for any $s\geq 2$, there exists a positive constant $\delta$ such that if $(v_0-1,u_0,\theta_0-1,q_0,S_0)\in W^{s,2}(\mathbb{R})$ and $\|(v_0-1,u_0,\theta_0-1,q_0,S_0)\|_{W^{s,2}(\mathbb{R})}<\delta$, then the Cauchy problem \eqref{HNS} with initial data \eqref{initial}--\eqref{far-field-} admits a unique local solution $(v,u,\theta,q,S)$ in some time interval $[0,T]$ with
\begin{equation*}
(v-1,u,\theta-1,q,S)\in C^0([0,T],H^2(\mathbb{R}))\cap C^1([0,T],H^{s-1}(\mathbb{R})).
\end{equation*}
\end{theorem}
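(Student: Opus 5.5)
Our plan is to write \eqref{HNS} as a first-order quasilinear system in the unknown $U=(v,u,\theta,q,S)^{T}$, namely
\begin{equation*}
A^0(U)U_t+A^1(U)U_x=B(U),
\end{equation*}
and then apply the classical local existence theory for symmetrizable hyperbolic systems (Kato, Majda, Lax) near the constant state $\bar U=(1,0,1,0,0)$.

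\textbf{Step 1: reduction to non-conservative form.}
The energy equation \eqref{HNS-c} will be turned into an equation for $\theta$. We subtract $u\times$\eqref{HNS-b} from it. We then use \eqref{p-}--\eqref{e-} to expand
\begin{equation*}
e_t=e_v v_t+e_\theta\theta_t+e_q q_t+e_S S_t,
\end{equation*}
and substitute $q_t$ and $S_t$ from \eqref{HNS-d} and \eqref{HNS-e}. The time-derivative coefficient of $\theta$ is then $e_\theta$, which is positive by \eqref{hypothesis}. After dividing by it, the system takes the form $U_t+A(U)U_x=G(U)$. Here $G$ is smooth near $\bar U$, collects the zero-order relaxation terms $-q/\tau_1$ and $-S/\tau_2$, and satisfies $G(\bar U)=0$.

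\textbf{Step 2: strict hyperbolicity at $\bar U$.}
We compute the characteristic polynomial $\det(\lambda I-A(U))$. At $\bar U$ we have $q=S=0$, so the matrix simplifies considerably. The $v$-column contributes $p_v$, and we expect the polynomial to factor as
\begin{equation*}
\lambda\,P(\lambda^2)=0,
\end{equation*}
with $P$ quadratic in $\lambda^2$. Its two roots should be real, positive and distinct, with discriminant and product controlled by $-p_v>0$, $e_\theta>0$, $\kappa/\tau_1$ and $\mu/\tau_2$. This yields five distinct real eigenvalues $0,\pm\lambda_1,\pm\lambda_2$. By continuity, strict hyperbolicity persists in a small neighborhood $\mathcal O$ of $\bar U$. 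We can then take as symmetrizer
\begin{equation*}
A^0(U)=L(U)^{T}L(U),
\end{equation*}
where $L(U)$ is the smooth matrix of left eigenvectors.

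\textbf{Step 3: energy estimates and conclusion.}
With the symmetrizer in hand, we carry out the standard scheme: iteration on the linearized problem, $H^s$ energy estimates with commutator (Moser) bounds, and a contraction in a lower norm. Since $s\ge2>3/2$, the Sobolev embedding $H^s\hookrightarrow W^{1,\infty}$ keeps the coefficients Lipschitz. This gives $U-\bar U\in C([0,T],H^s)\cap C^1([0,T],H^{s-1})$, which contains the stated regularity, together with uniqueness.

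The smallness $\|U_0-\bar U\|_{H^s}<\delta$ is used only to guarantee that the solution takes values in $\mathcal O$ for $t\in[0,T]$. There, $v,\theta>0$ and \eqref{hypothesis} holds with uniform bounds. The source term $G$ is smooth and vanishes at $\bar U$, so it is a harmless Lipschitz perturbation in $H^s$.

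\textbf{Main obstacle.}
We expect the main difficulty to be Step 2: showing that the $5\times5$ matrix $A(U)$ genuinely has five real, distinct eigenvalues. The terms coming from $e_q$, $e_S$, $p_q$ and $p_S$ couple $q$ and $S$ into the acoustic part of the system. We will first verify the claim exactly at $\bar U$, where these couplings vanish and the polynomial $P$ is explicit, and then use continuity of roots to extend it to $\mathcal O$. If the two roots of $P$ could coincide for some parameter values, we would instead build a symmetrizer directly from the entropy structure implied by \eqref{Gibbs}. Such a symmetrizer gives symmetric hyperbolicity without requiring distinct eigenvalues.
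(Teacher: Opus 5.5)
Your proposal is correct and follows essentially the paper's route: rewrite \eqref{HNS} as $\boldsymbol{u}_t+\boldsymbol{A}(\boldsymbol{u})\boldsymbol{u}_x=\boldsymbol{g}(\boldsymbol{u})$ (your Step 1 reproduces the paper's $\theta$-equation), prove strict hyperbolicity near $\bar{\boldsymbol{u}}$, and invoke the standard local theory, which the paper simply cites from Taylor while you spell out the symmetrizer $L^{T}L$ and the energy scheme. The only difference is that the paper gets five distinct real eigenvalues at every nearby state directly from $\Pi(0)>0$, $\Pi(\pm c_0)<0$ and the intermediate value theorem, whereas you check the biquadratic at $\bar{\boldsymbol{u}}$ and extend by continuity; both arguments need a nondegeneracy such as $p_\theta(\bar{\boldsymbol{u}})\neq0$ that \eqref{hypothesis} does not literally supply, and your closing remark rightly flags this case.
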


Then we show the ``gradient catastrophe'' for hyperbolic Navier-Stokes equations \eqref{HNS}. We consider an ideal polytropic gas, i.e., the constitutive relations are
\begin{align}
p(v,\theta,q,S)&=R\frac{\theta}{v} -\frac{\tau_1}{2\kappa\theta}q^2-\frac{\tau_2}{2\mu}S^2, \label{p}\\
e(v,\theta,q,S)&=C_{\mathrm{v}}\theta+\frac{\tau_1 v}{\kappa\theta}q^2+\frac{\tau_2v}{2\mu}S^2, \label{e}
\end{align}
The adiabatic index $\gamma=1+\frac{R}{C_{\mathrm{v}}}$ of a gas directly reflects the ways in which the gas molecules store energy, i.e., the degrees of freedom. For monatomic gases, The adiabatic index $\gamma=\frac53.$ The more complex the molecular structure, the more ways there are to store energy, and the smaller the value of $\gamma$ becomes, gradually approaching $1.$ Hence, the adiabatic index $\gamma$ satisfies 
\begin{equation}\label{gamma}
1<\gamma\leq\frac{5}{3}.
\end{equation}
Our next result of this paper is stated as follows.
\begin{theorem}\label{thm:main}
Suppose that the pressure $p$ satisfies \eqref{p} and the specific internal energy $e$ satisfies \eqref{e}. Let the adiabatic index $\gamma $ satisfies \eqref{gamma}.
Then there exist functions $(v_0(x),$ $u_0(x),$ $\theta_0(x),$ $q_0(x),$ $S_0(x)),$ which satisfy \eqref{far-field} and \eqref{far-field-}, such that the unique $C^1-$solution of the hyperbolic Navier-Stokes equations \eqref{HNS} with initial data \eqref{initial} exists only for a finite time. More precisely, the solution of \eqref{HNS} with initial data \eqref{initial} remains bounded, while the first-order derivatives of the solution blow up in finite time.
\end{theorem}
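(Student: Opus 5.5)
The plan is to rewrite \eqref{HNS} with the polytropic relations \eqref{p}--\eqref{e} as a first-order quasilinear system $U_t+A(U)U_x=B(U)$ for $U=(v,u,\theta,q,S)^{\top}$, where $B(U)=(0,0,0,-q/\tau_1,-S/\tau_2)^{\top}$ collects the relaxation terms. In the energy equation \eqref{HNS-c} we use \eqref{HNS-a}, \eqref{HNS-b}, \eqref{gibbs} and \eqref{HNS-d}--\eqref{HNS-e} to eliminate the time derivatives of $q$ and $S$ that appear through $e(v,\theta,q,S)$. This gives an equation of the form $e_\theta\theta_t+\dots=0$, which we solve for $\theta_t$ since $e_\theta>0$.

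\textbf{Step 1: eigenstructure.} Compute the characteristic polynomial of $A(U)$. One eigenvalue is $0$, coming from the linearly degenerate contact direction. The remaining four appear as $\pm\lambda_1,\pm\lambda_2$, the roots of a biquadratic $\lambda^4-b(U)\lambda^2+c(U)=0$. At the constant state $\bar U=(1,0,1,0,0)$ the coefficients are explicit in $R,C_{\mathrm v},\kappa,\mu,\tau_1,\tau_2$. We check $b^2-4c>0$ and $c>0$ there, so there are five distinct real eigenvalues and strict hyperbolicity holds near $\bar U$; the same computation supports Theorem~\ref{thm:local}. Next we compute right and left eigenvectors $r_i,l_i$ and the genuine nonlinearity coefficients $\nabla\lambda_i\cdot r_i$ at $\bar U$. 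These should be nonzero for the two families attached to the largest eigenvalue pair, i.e. the extreme characteristic speeds. The condition \eqref{gamma}, $1<\gamma\le 5/3$, should be exactly what prevents an accidental cancellation in $\nabla\lambda_i\cdot r_i$.

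\textbf{Step 2: blow-up via Riccati equations.} Following John and Li Ta-tsien, define $w_i=l_i(U)U_x$ and differentiate along the $i$-th characteristic $dx/dt=\lambda_i(U)$. This yields
\[
\frac{dw_i}{dt}=-(\nabla\lambda_i\cdot r_i)\,w_i^2+\sum_{(j,k)\neq(i,i)}\gamma_{ijk}w_jw_k+\sum_j\beta_{ij}w_j+\text{lower order}.
\]
The linear $\beta$-terms come from $B$ and are damping or bounded. Take initial data that are a small $C^1$ perturbation of $\bar U$ with compact support, $\|U_0-\bar U\|_{C^0}=\varepsilon$, arranged so that $w_1$ (the fastest genuinely nonlinear family) has a sufficiently large value of the right sign at one point, $|w_1(0,x_0)|\gg 1$, while $\|U_0-\bar U\|_{C^0}$ stays small. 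Then:
\begin{itemize}
\item the $C^0$ norm of $U-\bar U$ stays $O(\varepsilon)$ on the lifespan, by the decay from relaxation together with characteristic integration;
\item the $w_j$, $j\neq i$, remain bounded on the $i$-th characteristic, because characteristics from different families cross in time $O(1)$ with transversal speeds. One can also use John's weighted estimates $\int|w_j|\,dt$ along transversal curves.
\end{itemize}
Comparison with $y'=-c\,y^2-Cy-C$ with $c\approx\nabla\lambda_1\cdot r_1(\bar U)\neq0$ and large initial datum then forces $w_1\to\infty$ before a time $T^*\sim 1/(c|w_1(0,x_0)|)$. This time is shorter than the scale on which the damping $-w/\tau$ could prevent blow-up. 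Since $U$ stays bounded while $U_x$ is unbounded, we obtain the gradient catastrophe stated in Theorem~\ref{thm:main}.

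\textbf{Main obstacle.} The hardest step is Step 1. The symbolic computation of $\lambda_i$ and $\nabla\lambda_i\cdot r_i$ for the $5\times5$ matrix is messy because $p$ and $e$ depend on $q$ and $S$. I would first evaluate at $\bar U$, where $q=S=0$ kills most terms, and then use continuity. The second difficulty is handling the relaxation source terms in the Riccati inequality. This is handled by making the initial gradient large relative to $1/\tau_1$ and $1/\tau_2$.
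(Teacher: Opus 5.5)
\textbf{There is a genuine gap in Step 1. The difficulty lies there, and the one concrete claim you make there is false.}

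At $\bar U$ the nonzero eigenvalues satisfy $\lambda^2=R\ell$, with $\ell$ a root of $x^2-(z+w+\gamma)x+(z+1)w=0$, where $w=\kappa/(\tau_1C_{\mathrm v}R)$ and $z=\mu/(\tau_2R)$. The eigenvector and the gradient of the characteristic quartic computed in Section 3 (Lemma~\ref{lem:def-R}) hold for either root. Hence $\nabla\lambda\cdot r(\bar U)$ is a nonzero multiple of $\mathcal M\ell-w\mathcal N$. With $f,g$ as in \eqref{g}:
\begin{itemize}
\item the inner pair $\lambda_2,\lambda_4$ ($\ell=\frac12(f-g)$, $|\lambda|<c_0$) gives $\frac12(\mathcal Q-\mathcal Mg)$;
\item your outer pair $\lambda_1,\lambda_5$ ($\ell=\frac12(f+g)$) gives $\frac12(\mathcal Q+\mathcal Mg)$;
\item the product of the two equals $-(\gamma-1)w^2\mathcal P(w,z)$.
\end{itemize}
By Lemma~\ref{lem:P}, $\mathcal P(\cdot,z)$ has a positive zero $w_P(z)$. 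By Proposition~\ref{prop:main}, the inner coefficient never vanishes. Hence the outer coefficient vanishes on the whole curve $w=w_P(z)$; for instance $\gamma=1.4$, $z=1$, $w\approx 2.28$.

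For such $(\kappa,\tau_1,\mu,\tau_2)$ the extreme speeds are not genuinely nonlinear at $\bar U$. The coefficient of $w_1^2$ in your Riccati comparison is then zero, and Step 2 yields nothing. The restriction $1<\gamma\le\frac53$ does not prevent this.

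The correct families are $\lambda_2,\lambda_4$, and you must then actually prove $\mathcal R=\mathcal M\mathcal L-w\mathcal N<0$ for all $w,z>0$. That proof is the substance of the paper. It uses the alternative representations $\mathcal R=\frac12(\mathcal Q-\mathcal Mg)$ and $2(\gamma-1)w^2\mathcal P=-(\mathcal Mg+\mathcal Q)\mathcal R$. It also uses the root ordering $w_N<w_M<w_P\le w_Q$ of Lemma~\ref{lem:roots} and a sign analysis on four $w$-intervals. Your proposal replaces all of this with the expectation that no cancellation occurs.

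\textbf{A second, smaller error concerns the eigenvalue structure.} Away from $\{q=0\}$ the nonzero eigenvalues do not come in $\pm$ pairs. The quartic is $\lambda^4+a\lambda^3+b\lambda^2+c\lambda+d$, with $a=2q/(\theta e_\theta)$ and $c$ containing $p_q$. At $\bar U$ one has $\partial_qa=2/C_{\mathrm v}$ and $\partial_qc=-(\gamma-1)(2z+1)$. These derivatives produce the term $(\gamma-1)(2\mathcal L-2z-1)w\mathcal L$ in $\mathcal R$. Differentiating a biquadratic with only even, $U$-dependent coefficients, as your setup suggests, gives $\partial_q\lambda=0$ and the wrong coefficient.

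\textbf{On Step 2.} It is a hand-made version of B\"arlin's blow-up theorem, which the paper simply cites once strict hyperbolicity and one genuinely nonlinear family are established. As a sketch it describes the right mechanism. However, the $C^0$ control you attribute to relaxation really needs John's weak-interaction estimates. The reason is that the equations for $l_i(U)(U-\bar U)$ along characteristics contain products with the large gradient components $w_k$.
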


\begin{remark}
H. Freist\"{u}hler in \cite{Freistuhler} proved the blow-up of solutions to a hyperbolic Navier-Stokes Equations, which consists of five equations. In his proof, the far-field values $v_\pm$ depend on the values of $\theta_\pm.$ We have removed this assumption. In our proof, it is irrelevant what values $v_\pm$ and $\theta_\pm$ take.
\end{remark}
 
Our approach is as follows. Let $\boldsymbol{u}=(v,u,\theta,q,S)^{\mathrm{T}}$. First, we write the hyperbolic Navier-Stokes functions \eqref{HNS} in the conservation laws form 
\begin{equation}\label{CLS}
\boldsymbol{u}_t+\boldsymbol{A}(\boldsymbol{u})\boldsymbol{u}_x={ \boldsymbol{g}(\boldsymbol{u})}
\end{equation}
and prove that  it is strictly hyperbolic near constant equilibrium state $\overline{\boldsymbol{u}}=(1,0,1,0,0)^\mathrm{T}$. Inspired by the work of B\"{a}rlin in\cite{Barlin}, which showed that
the solution of the conservation laws \eqref{CLS} with suitable initial data blows up when the following three assumptions hold:
\begin{enumerate}
\item [(H1).] the matrix $\boldsymbol{A}(\boldsymbol{0})$ has only real, simple eigenvalues;
\item [(H2).] Matrix $\boldsymbol{A}(\boldsymbol{0})$ possesses a genuinely nonlinear eigenvalue;
\item [(H3).] $\boldsymbol{g}(\boldsymbol{u})=\boldsymbol{0}.$
\end{enumerate}
Among these three assumptions, the truly core and difficult one is to prove that Matrix $\boldsymbol{A}(\boldsymbol{0})$ possesses a genuinely nonlinear eigenvalue. Since the velocity $u(t,x)$ doesn't appear in the expression of matrix $\boldsymbol{A}(\boldsymbol{u})$ and vector $\boldsymbol{g}(\boldsymbol{u}),$ the eigenvalues of $\boldsymbol{A}(\boldsymbol{u})$ can be seen as a function of $(v,\theta,q,S).$ From this perspective, proving that matrix $\boldsymbol{A}(\boldsymbol{0})$ has a genuinely nonlinear eigenvalue $\lambda_*$ does not seem difficult (because there appear to be only four independent variables $(v,\theta,q,S)$). However, we must not forget that the relaxation parameters $\tau_1$ and $\tau_2$, the heat conductivity coefficient $\kappa$, the viscous coefficient $\mu$, and the adiabatic index $\gamma$ here are all unknown; therefore, in fact, both matrix $\boldsymbol{A}(\boldsymbol{u})$ and its eigenvalues $\lambda_*(\boldsymbol{u})$ must be regarded as functions of the variables $(v,\theta,q,S,\tau_1,\tau_2,\mu,\kappa,\gamma)$:
\begin{align*}
\boldsymbol{A}(\boldsymbol{u})=&\boldsymbol{A}(v,\theta,q,S,\tau_1,\tau_2,\mu,\kappa,\gamma), \\
\lambda_*(\boldsymbol{u})=&\lambda_*(v,\theta,q,S,\tau_1,\tau_2,\mu,\kappa,\gamma).
\end{align*}
Although the universal gas constant $R$ is a given constant, we do not consider its value here either. In fact, we will later prove that the genuine nonlinearity of the eigenvalues is independent of the value of the universal gas constant $R$.

Second, we construct the Riemann invariant 
\begin{equation*}
\mathcal{R}(\tau_1,\tau_2,\mu,\kappa,\gamma)=\nabla_{\boldsymbol{u}}\lambda_*(\overline{\boldsymbol{u}})\cdot \boldsymbol{r}_*(\overline{\boldsymbol{u}})
\end{equation*}
near constant equilibrium state $\overline{\boldsymbol{u}}=(1,0,1,0,0)^\mathrm{T}$, here $\boldsymbol{r}_*(\overline{\boldsymbol{u}})$ is the eigenvector corresponding to the eigenvalue $\lambda_*(\overline{\boldsymbol{u}}).$ In the process of deriving the expression for the Riemann invariant $\mathcal{R}(\tau_1,\tau_2,\mu,\kappa,\gamma)$, we found that its expression can be simplified by introducing new variables
\begin{equation*}
w=\frac{\kappa}{\tau_1C_{\mathrm{v}}R},\quad\quad\quad z=\frac{\mu}{\tau_2R}.
\end{equation*}
In this way, the Riemann invariant $\mathcal{R}(\tau_1,\tau_2,\mu,\kappa,\gamma)$ can be expressed as a function of two variables $w$ and $z$ that involves the parameter $\gamma$: 
\begin{equation*}
\mathcal{R}(w,z;\gamma)=\mathcal{M}(w,z;\gamma)\mathcal{L}(w,z;\gamma)-w\mathcal{N}(w,z;\gamma).
\end{equation*}
Here parameter $\gamma\in \left(1,\frac53\right]$ is the adiabatic index. The definition of $\mathcal{L}(w,z;\gamma)$ is given is \eqref{L} and the definitions of $\mathcal{M}(w,z;\gamma)$, $\mathcal{N}(w,z;\gamma)$ are given in \eqref{M}, \eqref{N}.

Finally, we prove that the Riemann invariant 
\begin{equation*}
\mathcal{R}(z,w;\gamma)<0,\quad\quad\quad \forall z>0,\ \ \forall w>0,\ \  \forall \gamma\in \left(1,\frac53\right].
\end{equation*}
Our idea is to fix arbitrary $z\in (0,+\infty)$ and $\gamma\in (1,\frac53],$  and treat $w$ as the independent variable. Meanwhile, we find that the Riemann invariant $\mathcal{R}(w)$ has three expressions
\begin{equation*}
\mathcal{R}(w) = 
\begin{cases}
\mathcal{M}(w)\mathcal{L}(w)-w\mathcal{N}(w), & \text{(a)} \\
\frac12\left(\mathcal{Q}(w)-\mathcal{M}(w)g(w)\right) , & \text{(b)} \\
-\frac{2(\gamma-1)w^2\mathcal{P}(w)}{\mathcal{M}(w)g(w)+\mathcal{Q}(w)} . & \text{(c)}
\end{cases}
\end{equation*}
Here the functions $\mathcal{M}(w)$, $\mathcal{N}(w)$, $\mathcal{P}(w)$ and $\mathcal{Q}(w)$ are, respectively, a quadratic, a linear, and two cubic functions of the independent variable $w$. The function $g(w)>0$ is defined in \eqref{g}. Points $w_M(z),$ $w_N(z),$ $w_P(z)$ are the zeros of functions $\mathcal{M}(z)$, $\mathcal{N}(z)$, $\mathcal{P}(z)$ respectively, and point $w_Q(z)$ is the smallest zero of function $\mathcal{Q}(w).$ The relation of these four zeros are given in Lemma \ref{lem:roots} (see Figure \ref{fig:zeros}).
\begin{figure}[htpb]
\centering
\includegraphics[width=\columnwidth]{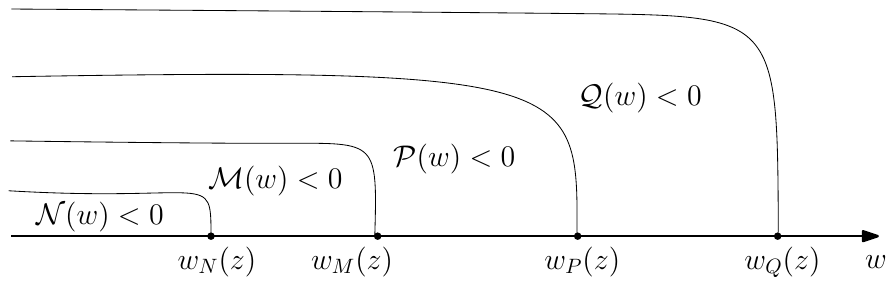} % 
\caption{The zeros of $w_M(z),$ $w_N(z),$ $w_P(z)$, $w_Q(z)$}
\label{fig:zeros}
\end{figure}
Now for any given $z\in (0,+\infty)$ and $\gamma\in (1,\frac53],$ we divide the interval $w \in (0, +\infty)$ into four parts
\begin{equation*}
(0,+\infty)=(0,w_N(z)]\cup (w_N(z),w_M(z)]\cup \mathcal{I}_-(z)\cup\mathcal{I}_+(z),
\end{equation*}
where
\begin{align*}
\mathcal{I}_-(z)=&\{w>w_M(z):\mathcal{Q}(w,z)\leq0\}, \\
\mathcal{I}_+(z)=&\{w>w_M(z):\mathcal{Q}(w,z)>0\}.
\end{align*}
To prove that the Riemann invariant $\mathcal{R}(w)<0,$ for the interval in which $w$ lies, use expression (c) of $\mathcal{R}(w)$ in the first and fourth parts, expression (a) in the second part, and expression (b) in the third part.

Our chapter arrangement is as follows. In section 2 we give the local existence of the solutions to the hyperbolic Navier-Stokes equations \eqref{HNS}. In section 3, we construct the Riemann invariant. The section 4 is devoted to proving that the Riemann invariant constructed above is negative, which implies that the hyperbolic Navier-Stokes equations \eqref{HNS} possesses a genuinely nonlinear eigenvalue. Appendix A is devoted to the property of $\mathcal{Q}(w),$ and Appendix B is devoted to the equation of state for hyperbolic Navier-Stokes equations \eqref{HNS}.

%\vspace{0.8em}
%\noindent\textit{\textbf{Notations.}} Throughout this paper, for given positive constant $T>0,$  for functions $f,g$ defined on $[0,T]\times \mathbb{R},$ we define  $f\lesssim g$ (or $f\gtrsim g$) if  there exists some constant $C$ depending only on $\Pi_0$ and $V_0$ such that  $f(t,x)\leq Cg(t,x)$ (or $f(t,x)\geq Cg(t,x)$) holds for any $(t,x)\in [0,T]\times \mathbb{R}$. Besides, we define $f\simeq g$ if $f\lesssim g$ and $f\gtrsim g.$

\section{Local existence}
In this section, we show that the system \eqref{HNS} is hyperbolic and then give the local existence result of the solutions to system \eqref{HNS}.

\subsection{Basic energy estimate}
In order to obtain the basic energy estimate for the hyperbolic Navier-Stokes equations, we first study the equation of entropy $s$. 

Using the Gibbs equation \eqref{Gibbs}, \eqref{gibbs} and the expressions for pressure $p(v,\theta,q,S)$ in \eqref{p-} and specific internal energy $e(v,\theta,q,S)$ in \eqref{e-}, the entropy $s$ satisfies
\begin{align*}
\frac{\partial s(v,\theta,q,S)}{\partial v}=&\frac{\partial p(v,\theta,q,S)}{\partial \theta}=\frac{\partial \tilde{p}(v,\theta)}{\partial\theta}+\frac{\tau_1}{2\kappa\theta^2}q^2 , \\
\frac{\partial s(v,\theta,q,S)}{\partial \theta}=&\frac{1}{\theta }\frac{\partial e(v,\theta,q,S)}{\partial \theta}=\frac{1}{\theta }\frac{\partial \tilde{e}(v,\theta)}{\partial  \theta } -\frac{\tau_1v}{\kappa\theta^3}q^2.
\end{align*}
Hence, the entropy $s$ satisfies
\begin{equation}\label{s}
s(v,\theta,q,S)=\tilde{s}(v,\theta)+\frac{\tau_1 v}{2\kappa\theta^2}q^2,
\end{equation}
where $\tilde{s}(v,\theta)$ yields
\begin{equation}\label{s_v-}
\frac{\partial\tilde{s}(v,\theta)}{ \partial v}=\frac{\partial \tilde{p}(v,\theta)}{\partial\theta},\quad\quad \frac{\partial\tilde{s}(v,\theta)}{\partial \theta } =\frac{1}{\theta} \frac{\partial \tilde{e}(v,\theta)}{\partial \theta}.
\end{equation}

Now we derive the equation of entropy $s$.
\begin{lemma}\label{lem:s}
The entropy $s$ satisfies
\begin{equation}\label{eq:entropy}
s_t+\left(\frac{q}{\theta} \right)_x=\frac{vq^2}{\kappa\theta^2}+\frac{vS^2}{\mu\theta}.
\end{equation}
\end{lemma}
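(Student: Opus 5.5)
The plan is a direct computation. Write $s_t$ by the chain rule in the variables $(v,\theta,q,S)$, and eliminate the time derivatives using the system \eqref{HNS}, the Gibbs relation \eqref{gibbs}, and the explicit forms \eqref{e-} and \eqref{s}.

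\textbf{Step 1: internal energy equation.} Subtract $u$ times \eqref{HNS-b} from \eqref{HNS-c} and use \eqref{HNS-a}. This gives
\begin{equation*}
e_t+pu_x+q_x=Su_x .
\end{equation*}

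\textbf{Step 2: chain rule for $\theta s_t$.} By \eqref{s}, $s$ does not depend on $S$, so
\begin{equation*}
\theta s_t=\theta s_v v_t+\theta s_\theta\theta_t+\theta s_q q_t .
\end{equation*}
From the computation preceding \eqref{s} we have $s_v=p_\theta$ and $\theta s_\theta=e_\theta$. Using $v_t=u_x$, this becomes
\begin{equation*}
\theta s_t=\theta p_\theta u_x+e_\theta\theta_t+\theta s_q q_t .
\end{equation*}

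\textbf{Step 3: substitute the energy equation.} Expand $e_t=e_vu_x+e_\theta\theta_t+e_qq_t+e_SS_t$ and insert $e_v=\theta p_\theta-p$ from \eqref{gibbs}. Solving for $e_\theta\theta_t$ and using Step 1 yields
\begin{equation*}
\theta s_t = Su_x-q_x+(\theta s_q-e_q)\,q_t-e_S S_t .
\end{equation*}
Here the $\theta p_\theta u_x$ terms cancel, and $pu_x$ combines with $e_t$.

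\textbf{Step 4: evaluate the coefficients.} From \eqref{s} and \eqref{e-},
\begin{equation*}
\theta s_q=\frac{\tau_1vq}{\kappa\theta},\qquad e_q=\frac{2\tau_1vq}{\kappa\theta},\qquad e_S=\frac{\tau_2vS}{\mu},
\end{equation*}
so $\theta s_q-e_q=-\frac{\tau_1vq}{\kappa\theta}$.

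\textbf{Step 5: eliminate the relaxation time derivatives.} Replace $\tau_1q_t$ using \eqref{HNS-d} and $\tau_2S_t$ using \eqref{HNS-e}:
\begin{equation*}
-\frac{\tau_1vq}{\kappa\theta}\,q_t=\frac{vq^2}{\kappa\theta}+\frac{q\theta_x}{\theta},\qquad
-e_SS_t=-Su_x+\frac{vS^2}{\mu}.
\end{equation*}
The $Su_x$ terms cancel, leaving
\begin{equation*}
\theta s_t=-q_x+\frac{q\theta_x}{\theta}+\frac{vq^2}{\kappa\theta}+\frac{vS^2}{\mu}.
\end{equation*}

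\textbf{Step 6: divide by $\theta$.} Since $-\frac{q_x}{\theta}+\frac{q\theta_x}{\theta^2}=-\left(\frac{q}{\theta}\right)_x$, dividing by $\theta$ gives exactly \eqref{eq:entropy}.

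The only delicate point is the bookkeeping of the $q$- and $S$-dependent terms. The coefficient $\frac{\tau_1 v}{\kappa\theta}q^2$ in \eqref{e-} has no factor $\tfrac12$, whereas the corresponding term in \eqref{s} does. Their difference is what produces the single factor $-\frac{\tau_1vq}{\kappa\theta}$ multiplying $q_t$, and hence the correct heat-flux term $\frac{q\theta_x}{\theta}$. This step must be checked carefully; everything else is routine cancellation.
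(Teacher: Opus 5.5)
Your computation is correct, and it is essentially the paper's proof. The paper works in the variables $(v,e,q,S)$: it takes $\partial\bar{s}/\partial v=p/\theta$ and $\partial\bar{s}/\partial e=1/\theta$ from the Gibbs equation and computes $\partial\bar{s}/\partial q=-\frac{\tau_1 vq}{\kappa\theta^2}$ and $\partial\bar{s}/\partial S=-\frac{\tau_2 vS}{\mu\theta}$ by implicitly differentiating $\bar\theta(v,e,q,S)$. You instead stay in $(v,\theta,q,S)$ and eliminate $e_\theta\theta_t$ with the internal energy equation, so your coefficients $(\theta s_q-e_q)/\theta$ and $-e_S/\theta$ are exactly those same two partial derivatives; using the explicit formula \eqref{s} spares you the implicit differentiation, but it is otherwise the same chain-rule-plus-substitution argument.
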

\begin{proof}
Since the hyperbolic Navier-Stokes equations \eqref{HNS} consists of equations in terms of $v,$ $u$ and $e$, we take $(v, e)$ as the independent variables among the five thermodynamic quantities $(v, \theta, p, e, s)$, and regard the remaining three as functions of $(v, e)$. Denote $\theta=\bar{\theta }(v,e,q,S),$ then $s=\bar{s}(v,e,q,S)$ in \eqref{s} and $e$ in \eqref{e-} can be rewritten as 
\begin{align}
\bar{s}(v,e,q,S)=&\tilde{s}(v,\bar{\theta }(v,e,q,S))+\frac{\tau_1v}{2\kappa\bar{\theta }(v,e,q,S)^2}q^2, \label{s--} \\
e=&\tilde{e}(v,\bar{\theta }(v,e,q,S))+\frac{\tau_1v}{\kappa\bar{\theta }(v,e,q,S)}q^2+\frac{\tau_2v}{2\mu}S^2.  \label{e--}
\end{align}
From the Gibbs equation \eqref{Gibbs}, we have
\begin{align}
\frac{\partial \bar{s}(v,e,q,S)}{\partial v }=&\frac{p}{\theta },  \label{s_v}\\
\frac{\partial \bar{s}(v,e,q,S)}{\partial e}=&\frac{1}{\theta }. \label{s_e}
\end{align}
Due to 
\begin{equation}\label{s_t}
s_t=\bar{s}(v,e,q,S)_t=\frac{\partial \bar{s}}{\partial v }v_t+\frac{\partial \bar{s}}{\partial e }e_t+\frac{\partial \bar{s}}{\partial q }q_t+\frac{\partial \bar{s}}{\partial S }S_t,
\end{equation}
we need to calculate $\frac{\partial \bar{s}}{\partial q }$ and $\frac{\partial \bar{s}}{\partial S }$.

Take the partial derivatives of equation \eqref{e--} with respect to $q$ and $S,$ respectively. It holds that
\begin{align}
\left(\frac{\partial\tilde{e}(v,\bar{\theta }) }{\partial \theta }-\frac{\tau_1v}{\kappa\theta ^2}q^2\right)\frac{\partial\bar{\theta } }{\partial q} =&-\frac{2\tau_1v}{\kappa\theta }q,\label{theta_q}\\
\left(\frac{\partial\tilde{e}(v,\bar{\theta }) }{\partial \theta }-\frac{\tau_1v}{\kappa\theta ^2}q^2\right)\frac{\partial\bar{\theta } }{\partial S}=&-\frac{\tau_2v}{\mu }S.\label{theta_S}
\end{align}
Take the partial derivatives of equation \eqref{s--} with respect to $q$ and $S,$ respectively. By \eqref{s_v-}, \eqref{theta_q} and \eqref{theta_S}, we have
\begin{equation}\label{s_q}
\frac{\partial\bar{s}}{\partial q} =\left(\frac{\partial \tilde{s}}{\partial\theta }-\frac{\tau_1v}{\kappa\theta^3}q^2 \right)\frac{\partial\bar{\theta }}{\partial q}+\frac{\tau_1v}{\kappa\theta^2}q=-\frac{\tau_1v}{\kappa\theta^2}q
\end{equation}
and
\begin{equation}\label{s_S}
\frac{\partial\bar{s}}{\partial S} =\left(\frac{\partial \tilde{s}}{\partial\theta }-\frac{\tau_1v}{\kappa\theta^3}q^2 \right)\frac{\partial\bar{\theta }}{\partial S}=-\frac{\tau_2v}{\mu\theta}S.
\end{equation}
The equation \eqref{eq:entropy} is proved by substituting \eqref{s_v}, \eqref{s_e}, \eqref{s_q} and \eqref{s_S} into \eqref{s_t}, and using the hyperbolic Navier-Stokes equations \eqref{HNS}.
\end{proof}
 
The relative entropy $\eta(v,u,\theta,q,S)$ is a non-negative convex entropy, defined as follows:
\begin{align}
\eta(v,u,\theta,q,S)=&\left(e+\frac12u^2\right)-\left(s-\tilde{p} (1,1)(v-1)\right)-\tilde{e}(1,1)\nonumber\\
=&\tilde{\eta}(v,u,\theta)+\left(1-\frac{1}{2\theta} \right)\frac{\tau_1 v}{\kappa\theta}q^2+\frac{\tau_2v}{2\mu}S^2,\label{eta}
\end{align}
where 
\begin{equation*}
\tilde{\eta}(v,u,\theta)=\left(\tilde{e}(v,\theta) +\frac12u^2\right)-\left(\tilde{s}(v,\theta) -\tilde{p} (1,1)(v-1)\right)-\tilde{e}(1,1)
\end{equation*}
is the relative entropy with $\tau_1=\tau_2=0$.

Taking the derivative of $\eta(v,u,\theta,q,S)$ with respect to $t,$ using (\ref{HNS-a})-(\ref{HNS-c}), Lemma \ref{lem:s}, we have
\begin{equation*}
\eta_t+\left(u(p-\tilde{p}(1,1))+\left(1-\frac{1}{\theta}\right)q-uS\right)_x+\frac{vq^2}{\kappa\theta^2}+\frac{vS^2}{\mu\theta}=0.
\end{equation*}
Hence, we obtain  the basic energy estimate for the hyperbolic Navier-Stokes equation \eqref{HNS}.
\begin{lemma}\label{lem:basic energy}
The solutions $v(t,x),$ $u(t,x),$ $\theta(t,x),$ $q(t,x)$ and $S(t,x)$ of the hyperbolic Navier-Stokes equations \eqref{HNS} satisfy
\begin{multline*}
\int_{\mathbb{R}}^{} \eta(v,u,\theta,q,S) \mathrm{d}x  +\int_{0}^{t}\int_{\mathbb{R}}^{ } \left(\frac{vq^2}{\kappa\theta^2}+\frac{vS^2}{\mu\theta} \right)  \mathrm{d}x\mathrm{d}\tau   \\
=\int_{\mathbb{R}}^{} \eta(v_0,u_0,\theta_0,q_0,S_0) \mathrm{d}x,
\end{multline*}
here $\eta(v,u,\theta,q,S)$ is the relative entropy given in \eqref{eta}.
\end{lemma}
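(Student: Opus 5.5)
The proof integrates the local entropy identity derived just before the statement over $[0,t]\times\mathbb{R}$. That identity reads
\begin{equation*}
\eta_t+\Big(u(p-\tilde{p}(1,1))+\big(1-\tfrac{1}{\theta}\big)q-uS\Big)_x+\frac{vq^2}{\kappa\theta^2}+\frac{vS^2}{\mu\theta}=0 .
\end{equation*}
It is obtained by differentiating the definition \eqref{eta} of $\eta$ in time and combining four ingredients. The total energy part $\left(e+\frac12u^2\right)_t$ is replaced using \eqref{HNS-c}. The entropy part $s_t$ is replaced using Lemma \ref{lem:s}. The linear term $\tilde{p}(1,1)v_t=\tilde{p}(1,1)u_x$ is handled by \eqref{HNS-a}, which lets the constant $\tilde p(1,1)$ combine with $up$ into a flux. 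Finally, $-\left(\frac{q}{\theta}\right)_x$ from the entropy equation merges with $q_x$ into $\left(\big(1-\frac1\theta\big)q\right)_x$. I would recheck this bookkeeping line by line, since it is the only real computation.

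With the identity in hand, I integrate in $x$ over $\mathbb{R}$ and then in time from $0$ to $t$. The time-derivative term gives $\int\eta(t)\,dx-\int\eta(0)\,dx$, which yields the claimed equality once the flux integral is shown to vanish. For that, I work with the local solution of Theorem \ref{thm:local}: $(v-1,u,\theta-1,q,S)\in C^0([0,T],H^2)$. By Sobolev embedding, $v-1,u,\theta-1,q,S\to0$ as $|x|\to\infty$ uniformly in $t\in[0,T]$, and $v,\theta$ stay bounded away from zero for small data. Hence the flux $F=u(p-\tilde p(1,1))+(1-\frac1\theta)q-uS$ tends to $0$ at $\pm\infty$, and $\int_{\mathbb{R}}F_x\,dx=0$. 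Integrability of $\eta$ also follows, because $\eta$ vanishes to second order at the equilibrium $(1,0,1,0,0)$. To see this, note that $\partial_v\tilde\eta$ and $\partial_\theta\tilde\eta$ vanish at $(1,1)$ by the Gibbs relation \eqref{s_v-} and the normalizations in \eqref{eta}. Therefore $\eta=O(|(v-1,u,\theta-1,q,S)|^2)$, which lies in $L^1$ for $H^2$ data, and the dissipation terms are likewise in $L^1$.

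The main obstacle is justifying the differentiations for a solution that is only in $C^0H^2\cap C^1H^{s-1}$, together with the vanishing of the boundary flux. I plan to handle both by a standard density argument: smooth, rapidly decaying approximating data give classical solutions for which the computation is literal, and the resulting identity passes to the limit by continuous dependence in $H^2$. Nonnegativity and convexity of $\eta$ near equilibrium, stated before the lemma, are not needed for the equality itself.
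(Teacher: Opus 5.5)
Your proposal is correct and follows the paper's own route. The paper likewise differentiates \eqref{eta} in time, combines \eqref{HNS-a}--\eqref{HNS-c} with Lemma~\ref{lem:s} to get the same local conservation identity, and integrates it over $[0,t]\times\mathbb{R}$, leaving implicit the decay and integrability checks you supply; note that your integrability claim needs $\eta(1,0,1,0,0)=0$, i.e.\ the normalization $\tilde s(1,1)=0$, which is made explicit via $s^0$ in Appendix~B. Also, since $H^2(\mathbb{R})\hookrightarrow C^1(\mathbb{R})$, the local solution is already classical, so your density argument is harmless but not needed.
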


\subsection{Local existence}

For simplicity of notation, we use abbreviations for partial derivatives, such as 
\begin{equation*}
p_v=\frac{\partial p(v,\theta,q,S)}{\partial v},\quad\quad\quad e_\theta=\frac{\partial e(v,\theta,q,S)}{\partial \theta}.
\end{equation*}

We first show that system \eqref{HNS} is strictly hyperbolic for small initial data. Theorem \ref{thm:local} then follows from this strict hyperbolicity by the results of \cite{book-Taylor}.

\begin{lemma}\label{lem:hyperbolic}
There exists a positive constant $\delta,$ such that if $|(v-1,\theta-1,q,S)| \leq \delta$, then the system \eqref{HNS} is strictly hyperbolic.
\end{lemma}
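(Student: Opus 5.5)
We would prove Lemma \ref{lem:hyperbolic} by writing \eqref{HNS} as a quasilinear first-order system \eqref{CLS} in the unknowns $\boldsymbol{u}=(v,u,\theta,q,S)^{\mathrm T}$. We then show that $\boldsymbol{A}(\overline{\boldsymbol{u}})$ at the equilibrium $\overline{\boldsymbol{u}}=(1,0,1,0,0)^{\mathrm T}$ has five real, simple eigenvalues, and extend this to a neighbourhood by continuity.

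\textbf{Step 1: quasilinear form.} We eliminate $e$ from \eqref{HNS-c}. Subtracting $u$ times \eqref{HNS-b} gives
\begin{equation*}
e_t+pu_x+q_x=Su_x .
\end{equation*}
Next expand $e_t=e_vv_t+e_\theta\theta_t+e_qq_t+e_SS_t$, and substitute $v_t=u_x$ together with
\begin{equation*}
q_t=-\frac{\kappa}{\tau_1 v}\theta_x-\frac{q}{\tau_1},\qquad S_t=\frac{\mu}{\tau_2 v}u_x-\frac{S}{\tau_2}.
\end{equation*}
Since $e_\theta>0$ by \eqref{hypothesis} (near equilibrium), we can divide by $e_\theta$ and obtain
\begin{equation*}
\theta_t+\frac{1}{e_\theta}\Big(e_v+p-S+\frac{\mu e_S}{\tau_2 v}\Big)u_x-\frac{\kappa e_q}{\tau_1 v e_\theta}\theta_x+\frac{1}{e_\theta}q_x=\text{(zero order terms)}.
\end{equation*}
The momentum equation becomes
\begin{equation*}
u_t+p_vv_x+p_\theta\theta_x+p_qq_x+(p_S-1)S_x=0.
\end{equation*}
Together with \eqref{HNS-a}, \eqref{HNS-d} and \eqref{HNS-e}, this defines $\boldsymbol{A}(\boldsymbol{u})$. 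Its entries depend smoothly on $(v,\theta,q,S)$ in a neighbourhood of $(1,1,0,0)$, because $\theta>0$, $v>0$ and $e_\theta>0$ there.

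\textbf{Step 2: spectrum at equilibrium.} At $q=S=0$ we have $e_q=e_S=p_q=p_S=0$, and \eqref{gibbs} gives $e_v+p=\theta p_\theta$. Hence, in the order $(v,u,\theta,q,S)$,
\begin{equation*}
\boldsymbol{A}(\overline{\boldsymbol{u}})=\begin{pmatrix}0&-1&0&0&0\\ p_v&0&p_\theta&0&-1\\ 0&\theta p_\theta/e_\theta&0&1/e_\theta&0\\ 0&0&\kappa/\tau_1&0&0\\ 0&-\mu/\tau_2&0&0&0\end{pmatrix}.
\end{equation*}
Solving $\boldsymbol{A}\varphi=\lambda\varphi$ for $\lambda\neq0$ and eliminating the $v,q,S$ components reduces the problem to the scalar equation
\begin{equation*}
f(X):=(X-\alpha)(X-\beta)-\sigma X=0,\qquad X=\lambda^2,
\end{equation*}
where
\begin{equation*}
\alpha=-p_v+\frac{\mu}{\tau_2}>0,\qquad \beta=\frac{\kappa}{\tau_1e_\theta}>0,\qquad \sigma=\frac{\theta p_\theta^2}{e_\theta}\ge0.
\end{equation*}
The signs of $\alpha$ and $\beta$ come from \eqref{hypothesis}. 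The characteristic polynomial should then be $\lambda f(\lambda^2)$; we would confirm this by expanding the determinant directly, which also shows that $\lambda=0$ is a simple root. The quadratic $f$ has
\begin{equation*}
f(0)=\alpha\beta>0,\qquad \text{root sum } \alpha+\beta+\sigma>0,\qquad \text{discriminant } (\alpha+\beta+\sigma)^2-4\alpha\beta\ge(\alpha-\beta)^2+2\sigma(\alpha+\beta)>0.
\end{equation*}
The strict inequality holds unless $\alpha=\beta$ and $\sigma=0$; in the situation of interest $p_\theta\neq0$, so $\sigma>0$. Hence $f$ has two distinct positive roots $X_1<X_2$. This yields five distinct real eigenvalues
\begin{equation*}
\pm\sqrt{X_2},\quad\pm\sqrt{X_1},\quad 0.
\end{equation*}

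\textbf{Step 3: perturbation.} The coefficients of $\det(\lambda I-\boldsymbol{A}(\boldsymbol{u}))$ are continuous in $(v,\theta,q,S)$, and $\boldsymbol{A}$ does not depend on $u$. Roots of a real polynomial depend continuously on its coefficients, and non-real roots of a real polynomial occur in conjugate pairs. Therefore, for $|(v-1,\theta-1,q,S)|\le\delta$ with $\delta$ small, the five simple real roots remain real and distinct, which is strict hyperbolicity.

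The main obstacle is Step 1: correctly carrying the $q$- and $S$-dependence of $e$ and $p$ (from \eqref{p-}–\eqref{e-}) through the energy equation. One must check that these terms contribute no extra principal entries at equilibrium, so that the determinant is exactly $\lambda f(\lambda^2)$. We would verify this by direct cofactor expansion along the last two rows.
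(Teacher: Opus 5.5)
Your proof is correct, but it takes a different route from the paper. The paper computes the full characteristic polynomial $\lambda\Pi(\lambda)$ at an arbitrary state near $\overline{\boldsymbol{u}}$, including the $q$-dependent coefficients $a$ and $c$. It then locates the four nonzero roots by sign evaluation:
\begin{itemize}
\item $\Pi(\pm c_0)=-c_0\big(\frac{\theta p_\theta^2}{e_\theta}c_0\pm\frac{\kappa\theta p_\theta p_q}{\tau_1 v e_\theta}\big)<0$;
\item $\Pi(0)=\frac{\kappa}{\tau_1 v e_\theta}c_0^2>0$;
\item $\Pi(\lambda)\to+\infty$ as $|\lambda|\to\infty$.
\end{itemize}
The intermediate value theorem then gives $\lambda_1<-c_0<\lambda_2<0<\lambda_4<c_0<\lambda_5$ directly at every state with $|(v-1,\theta-1,q,S)|\le\delta$. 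There, smallness serves only to keep $c_0^2>0$ and to let $p_\theta^2c_0$ dominate the $p_\theta p_q$ term. You instead work only at the equilibrium, where the quartic is biquadratic and solvable exactly. You then extend by continuity of simple roots together with the conjugate-pair argument.

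Your route needs less computation, since you never need the non-equilibrium coefficients. The price is a non-explicit $\delta$ and no interlacing with $c_0$ away from equilibrium. The paper's route gives that uniform interlacing, and this ordering is what Section 3 relies on to identify $\lambda_2,\lambda_4$ as the roots with $\lambda_*^2=\mathcal{L}R$. You could recover this labelling at equilibrium: $f(\alpha)=-\sigma\alpha<0$ gives $X_1<c_{0*}^2<X_2$, and continuity of the five simple roots preserves the order.

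Finally, both arguments depend on $p_\theta(1,1,0,0)\neq0$, which is not part of \eqref{hypothesis}. You state this explicitly, whereas the paper uses it tacitly when asserting $\Pi(\pm c_0)<0$.
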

\begin{proof}
From \eqref{p-}, \eqref{e-} and \eqref{gibbs}, the equations \eqref{HNS}$_2$ and  \eqref{HNS}$_3$ are equivalent to 
\begin{equation*}
u_t+p_vv_x+p_\theta\theta_x+p_qq_x+(p_S-1)S_x=0
\end{equation*}
and
\begin{equation*}
e_\theta\theta_t+\theta p_\theta u_x-\frac{2q}{\theta}\theta_x+q_x=\frac{2vq^2}{\kappa\theta}+\frac{vS^2}{\mu}.
\end{equation*}
Denote by $\boldsymbol{u}=(v,u,\theta,q,S)^{\mathrm{T}},$ then the hyperbolic Navier-Stokes equations  \eqref{HNS} can be rewritten as
\begin{equation*}
\boldsymbol{u}_t+\boldsymbol{A}(\boldsymbol{u})\boldsymbol{u}_x=\boldsymbol{g}(\boldsymbol{u}),
\end{equation*}
where
\begin{equation*}
\boldsymbol{A}(\boldsymbol{u})=\begin{pmatrix}
0&-1&0&0&0\\
p_v&0&p_\theta&p_q&p_S-1\\
0&\frac{\theta p_\theta}{e_\theta}&-\frac{2q}{\theta e_\theta}&\frac{1}{e_\theta}&0\\
0&0&\frac{\kappa}{\tau_1 v}&0&0\\
0&-\frac{\mu}{\tau_2 v}&0&0&0
\end{pmatrix},\quad\quad
\boldsymbol{g}(\boldsymbol{u})=\begin{pmatrix}
0\\0\\\frac{2vq^2}{\kappa\theta e_\theta}+\frac{vS^2}{\mu e_\theta}\\-\frac{q}{\tau_1}\\-\frac{S}{\tau_2} 
\end{pmatrix}.
\end{equation*}
It can be calculated that its characteristic polynomial is:
\begin{equation}\label{eigenpolynomial}
\det(\lambda\boldsymbol{E}-\boldsymbol{A})=\lambda \Pi(\lambda),
\end{equation}
where
\begin{equation*}
\Pi(\lambda)=\lambda^4+\frac{2q}{\theta e_\theta}\lambda^3-\left(c_0^2+\frac{\kappa}{\tau_1ve_\theta}+\frac{\theta p_\theta^2}{e_\theta} \right)\lambda^2-\left(\frac{2q}{\theta e_\theta}c_0^2+\frac{\kappa\theta p_\theta p_q}{\tau_1ve_\theta} \right)\lambda+\frac{\kappa}{\tau_1ve_\theta}c_0^2.
\end{equation*}
Here
\begin{equation}\label{c0}
c_0=\sqrt{\frac{\mu(1-p_S)}{\tau_2v}-p_v}.
\end{equation}
Notice that
\begin{equation*}
\Pi(\pm c_0)=-c_0\left(\frac{\theta p_\theta^2}{e_\theta}c_0\pm\frac{\kappa\theta p_\theta p_q}{\tau_1ve_\theta} \right),\quad\quad \Pi(0)=\frac{\kappa}{\tau_1ve_\theta}c_0^2,
\end{equation*}
and that when $|(v-1,\theta-1,q,S)|$ is small enough, it holds that
\begin{equation*}
\frac{\mu(1-p_S)}{\tau_2v}-p_v>0,\quad\quad \frac{\theta p_\theta^2}{e_\theta}c_0\pm\frac{\kappa\theta p_\theta p_q}{\tau_1ve_\theta}>0
\end{equation*}
by \eqref{p-}, \eqref{e-}, \eqref{hypothesis} and the fact that 
$$|p_S|+|p_q|\leq C\delta.$$
Hence the function $\Pi(\lambda)$ has four roots, which means that the characteristic polynomial  $\det(\lambda\boldsymbol{E}-\boldsymbol{A})$ has five different real roots $\lambda_1,\lambda_2,\lambda_3,\lambda_4,\lambda_5,$ satisfying
\begin{equation*}
\lambda_1<-c_0<\lambda_2<\lambda_3=0<\lambda_4<c_0<\lambda_5.
\end{equation*}
This means that the Navier-Stokes equations \eqref{HNS} are indeed hyperbolic.
\end{proof}

\section{The Riemann invariant}
In this section, we give the definition of the Riemann invariant of hyperbolic Navier-Stokes equations \eqref{HNS}.

To begin with, we define the functions $a,b,c,d$ of the variables $v,\theta,q,S:$
\begin{align*}
a(v,\theta,q,S):=& \frac{2q}{\theta e_\theta},\\
b(v,\theta,q,S):=&-\left(c_0^2+\frac{\kappa}{\tau_1ve_\theta}+\frac{\theta p_\theta^2}{e_\theta} \right),\\
c(v,\theta,q,S):=&-\left(\frac{2q}{\theta e_\theta}c_0^2+\frac{\kappa\theta p_\theta p_q}{\tau_1ve_\theta} \right),\\
d(v,\theta,q,S):=&\frac{\kappa}{\tau_1ve_\theta}c_0^2,
\end{align*}
where the pressure $p(v,\theta,q,S)$ and the internal energy $e(v,\theta,q,S)$ is given in \eqref{p} and \eqref{e}. Here $c_0=c_0(v,\theta,q,S)$ is given in \eqref{c0}.

Then the function $\Pi(\lambda)$ in the characteristic polynomial \eqref{eigenpolynomial} can be rewritten as 
\begin{equation}\label{Lambda}
\Pi(\lambda)=\lambda^4+a\lambda^3+b\lambda^2+c\lambda+d.
\end{equation}

Let $c_{0*},a_*,b_*,c_*$ and $d_*$ be the value of the functions $c_{0}(v,\theta,q,S),$ $a(v,\theta,q,S),$ $b(v,\theta,q,S),$ $c(v,\theta,q,S)$ and $d(v,\theta,q,S)$ at equilibrium state $\bar{\boldsymbol{u}}=(1,0,1,0,0)^{\mathrm{T}}.$ Let $\boldsymbol{A}_*$ be the matrix of $\boldsymbol{A}(\boldsymbol{u})$ at equilibrium state $\bar{\boldsymbol{u}}$ and $\lambda_*$ be the value of the second eigenvalue $\lambda_2$ or the fourth eigenvalue $\lambda_4$ at equilibrium state $\bar{\boldsymbol{u}}$. It can be observed that all these constants $a_*,b_*,c_*,d_*,\lambda_*$ and constant matrix $\boldsymbol{A}_*$ are related to the relaxation parameters $\tau_1,\tau_2,$ the viscosity coefficient $\mu,$ the heat conduction $\kappa,$ and the specific heat capacity at constant volume $C_V$ (or the adiabatic index $\gamma$).
Thus, we regard these constants or constant matrix as functions of the relaxation parameters $\tau_1,\tau_2,$ the viscosity coefficient $\mu$, the heat conduction $\kappa,$ and the adiabatic index $\gamma$. In the rest of this article, we denote
\begin{equation}\label{wz}
w=\frac{\kappa}{\tau_1C_{\mathrm{v}}R}>0,\quad\quad\quad z=\frac{\mu}{\tau_2R}>0  
\end{equation}
for brevity.

\subsection{The eigenvector near equilibrium state}
At equilibrium state $\bar{\boldsymbol{u}}=(1,0,1,0,0)^{\mathrm{T}},$ $c_{0*}=\sqrt{R(1+z)}.$ Hence, we have
\begin{equation*}
(a_*,b_*,c_*,d_*)=(0, -R(z+w+\gamma), 0, R^2(z+1)w).
\end{equation*}
This means the eigenvalue $\lambda_*$ at equilibrium state satisfies
\begin{equation*}
\lambda_*^4-R(z+w+\gamma)\lambda_*^2+R^2(z+1)w=0
\end{equation*}
by \eqref{Lambda}. Note that $\lambda_*^2$ satisfies a quadratic equation, whose discriminant is positive:
\begin{equation*}
R^2\left[(z+w+\gamma)^2-4(z+1)w\right]=R^2\left[(z-w+\gamma)^2+4w(\gamma-1)\right]>0.
\end{equation*}
Meanwhile, $\lambda_*$ is the value of the second eigenvalue $\lambda_2$ or the fourth eigenvalue $\lambda_4$ at equilibrium state. Thus, the eigenvalue $\lambda_*$ satisfies
\begin{equation}\label{lambda2}
\lambda_*^2=\frac{(z+w+\gamma)-\sqrt{(z-w+\gamma)^2+4w(\gamma-1)}}{2}R \\
=:\mathcal{L}R,
\end{equation}
where
\begin{equation}\label{L}
\begin{split}
\mathcal{L}=&\frac{(z+w+\gamma)-\sqrt{(z+w+\gamma)^2-4(z+1)w}}{2}  \\
=&\frac{(z+w+\gamma)-\sqrt{(z-w+\gamma)^2+4w(\gamma-1)}}{2}
\end{split}
\end{equation}
is the smaller root of the quadratic equation 
\begin{equation}\label{eqL}
x^2-(z+w+\gamma)x+(z+1)w=0.
\end{equation}

Given the eigenvalue $\lambda_*$ at equilibrium state in hand, now we turn to find the eigenvector at equilibrium state. Notice that
\begin{equation*}
\boldsymbol{A}_*=\begin{pmatrix}
0&-1&0&0&0\\
-R&0&R&0&-1\\
0&\gamma-1&0&\frac{\gamma-1}{R}&0\\
0&0&\frac{R^2}{\gamma-1}w&0&0\\
0&-Rz&0&0&0
\end{pmatrix}. 
\end{equation*}
The eigenvector $\boldsymbol{r}_*$ of the matrix $\boldsymbol{A}_*,$ which satisfies $\boldsymbol{A}_*\boldsymbol{r}_*=\lambda_*\boldsymbol{r}_*$, can be written as
\begin{equation*}
\boldsymbol{r}_*=\left(Rw-\lambda_*^2,\ -\lambda_*(Rw-\lambda_*^2),\ (\gamma-1)\lambda_*^2,\ R^2w\lambda_*,\ Rz(Rw-\lambda_*^2)\right)^{\mathrm{T}}. 
\end{equation*}
Substituting the value of $\lambda_*$ in \eqref{lambda2} into the eigenvector $\boldsymbol{r}_*$, one has
\begin{equation}\label{eigenvector-r}
\boldsymbol{r}_*=\left(R(w-\mathcal{L}),\ -\lambda_*R(w-\mathcal{L}),\ (\gamma-1)R\mathcal{L},\ R^2w\lambda_*,\ R^2z(w-\mathcal{L})\right)^{\mathrm{T}}.
\end{equation}

\subsection{The gradient of eigenvalue near equilibrium state}
Since the Riemann invariant is the dot product of the gradient of the eigenvalue $\lambda$ and the right eigenvector $\boldsymbol{r}_*,$ we need to find the gradient of the eigenvalue near equilibrium state.

We write the expression of function $\Pi(\lambda)$ in \eqref{Lambda} as
\begin{equation*}
\Pi(v,\theta,q,S,\lambda)=\lambda^4+a(v,\theta,q,S)\lambda^3+b(v,\theta,q,S)\lambda^2+c(v,\theta,q,S)\lambda+d(v,\theta,q,S).
\end{equation*}
The fact that the eigenvalue $\lambda(v,\theta,q,S)$ solves
\begin{equation*}
\Pi(v,\theta,q,S,\lambda(v,\theta,q,S))=0
\end{equation*}
implies
\begin{equation}\label{parellel}
\nabla_{(v,\theta,q,S)}\lambda(v,\theta,q,S) \quad\pll\quad \nabla_{(v,\theta,q,S)}\Pi(v,\theta,q,S,\lambda)
\end{equation}
by chain rule. This means we only need to find the value of $\nabla_{(v,\theta,q,S)}\Pi(v,\theta,q,S,\lambda)$ near equilibrium state.

Note that all the first-order partial derivatives and the second-order partial derivatives of the pressure $p(v,\theta,q,S)$ and the internal energy $e(v,\theta,q,S)$ near equilibrium state $\bar{\boldsymbol{u}}=(1,0,1,0,0)^{\mathrm{T}}$ are zero, except that
\begin{align*}
&\frac{\partial p}{\partial v}=\frac{\partial^2 p}{\partial v\partial\theta}=-R,&& \frac{\partial p}{\partial \theta}=R, &&\frac{\partial^2 p}{\partial v^2}=2R,&&\frac{\partial e}{\partial \theta}=C_{\mathrm{v}},\\
&\frac{\partial^2 p}{\partial S^2}=-\frac{\tau_2}{\mu},&& \frac{\partial^2 p}{\partial q^2}=-\frac{\tau_1}{\kappa},&& \frac{\partial^2 e}{\partial S^2}=\frac{\tau_2}{\mu},&& \frac{\partial^2 e}{\partial q^2}=\frac{2\tau_1}{\kappa}.
\end{align*}
From the above observations we can obtain
\begin{align*}
\nabla_{(v,\theta,q,S)}\frac{2q}{\theta e_\theta}=&\left(0,0,\frac{2}{C_{\mathrm{v}}},0\right),  \\
\nabla_{(v,\theta,q,S)}c_0^2=&\left(-\frac{\mu}{\tau_2}-2R,R,0,1\right),\\
\nabla_{(v,\theta,q,S)}\frac{\kappa}{\tau_1 ve_\theta}=&\left(-\frac{\kappa}{\tau_1 C_{\mathrm{v}}},0,0,0\right),\\
\nabla_{(v,\theta,q,S)}\frac{\theta p_\theta^2}{e_\theta}=&\left(-\frac{2R^2}{C_{\mathrm{v}}},\frac{R^2}{C_{\mathrm{v}}},0,0\right),\\
\nabla_{(v,\theta,q,S)}\frac{\kappa\theta p_\theta p_q}{\tau_1v e_\theta}=&\left(0,0,-\frac{R}{C_{\mathrm{v}}},0\right)
\end{align*}
near equilibrium state. Substituting the above results into the expressions of $a(v,\theta,q,S),$ $b(v,\theta,q,S),$ $c(v,\theta,q,S),$ $d(v,\theta,q,S)$ and utilizing the new variants $w, z$ in \eqref{wz} , it holds that
\begin{subequations}\label{nabla-abcd}
\begin{align}
\nabla_{(v,\theta,q,S)}a=&\left(0,0,\frac{2}{C_{\mathrm{v}}},0\right), \label{nabla-a}\\
\nabla_{(v,\theta,q,S)}b=&R\left(z+w+2\gamma,-\gamma,0,-\frac{1}{R} \right), \label{nabla-b}\\
\nabla_{(v,\theta,q,S)}c=&\left(0,0,-(\gamma-1)(2z+1),0\right), \label{nabla-c}\\
\nabla_{(v,\theta,q,S)}d=&wR^2\left(-2z-3,1,0,\frac{1}{R}  \right) \label{nabla-d}
\end{align}
\end{subequations}
near equilibrium state. Since 
\begin{equation*}
\nabla_{(v,\theta,q,S)}\Pi(v,\theta,q,S,\lambda_*)=\lambda_*^3 \nabla_{(v,\theta,q,S)}a+\lambda_*^2\nabla_{(v,\theta,q,S)}b+\lambda_*\nabla_{(v,\theta,q,S)}c+\nabla_{(v,\theta,q,S)}d,
\end{equation*}
we have
\begin{subequations}\label{nabla-L}
\begin{align}
\frac{\partial}{\partial v}\Pi(v,\theta,q,S,\lambda_*)=&R^2\left[(z+w+2\gamma)\mathcal{L}-(2z+3)w\right] ,  \label{nabla-Lv}\\
\frac{\partial}{\partial \theta}\Pi(v,\theta,q,S,\lambda_*)=& R^2(w-\gamma \mathcal{L}) , \label{nabla-Ltheta}\\
\frac{\partial}{\partial q}\Pi(v,\theta,q,S,\lambda_*)=& (\gamma-1)(2\mathcal{L}-2z-1)\lambda_* , \label{nabla-Lq}\\
\frac{\partial}{\partial S}\Pi(v,\theta,q,S,\lambda_*)=&R(w-\mathcal{L})   \label{nabla-LS}
\end{align}
\end{subequations}
near equilibrium state by \eqref{lambda2} and \eqref{nabla-abcd}.

\subsection{The construction of the Riemann invariant}

Since \eqref{parellel} means that the gradient of $\lambda$ and the gradient of $\Pi$ are parallel, the Riemann invariant of the hyperbolic Navier-Stokes equations \eqref{HNS} can be defined as
\begin{equation}\label{def-R}
\mathcal{R}(w,z):=\frac{1}{R^3}\nabla_{(v,u,\theta,q,S)}\Pi_*\cdot\boldsymbol{r}_*,
\end{equation}
where $\Pi_*=\Pi(1,1,0,0,\lambda_*).$

The following lemma gives the expression of the Riemann invariant $\mathcal{R}$ with respect to $z$ and $w$.
\begin{lemma}\label{lem:def-R}
The Riemann invariant $\mathcal{R}$ constructed above satisfies
\begin{equation}\label{R}
\mathcal{R}(w,z)=\mathcal{M}(w,z)\mathcal{L}(w,z)-w\mathcal{N}(w,z),
\end{equation}
where
\begin{align}
\mathcal{M}(w,z)=&2(\gamma-1)w^2+(\gamma^2-2\gamma+3)w-\gamma(\gamma+1)(z+\gamma), \label{M} \\
\mathcal{N}(w,z)=&2\left((\gamma-1)z+\gamma\right)w-\gamma(\gamma+1)(z+1). \label{N}
\end{align}
\end{lemma}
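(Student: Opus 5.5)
The identity follows from a direct evaluation of the definition \eqref{def-R}, followed by one algebraic reduction that uses the quadratic equation \eqref{eqL} satisfied by $\mathcal{L}$. The plan is to compute the dot product, reduce it to a polynomial in $\mathcal{L}$ of degree two, and then lower the degree to one.

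\textbf{Step 1: the dot product.} Since $\Pi$ does not depend on $u$, the $u$-component of $\nabla_{(v,u,\theta,q,S)}\Pi_*$ vanishes. So only the four partial derivatives \eqref{nabla-Lv}--\eqref{nabla-LS} pair with the components $1,3,4,5$ of $\boldsymbol{r}_*$ in \eqref{eigenvector-r}. This gives
\begin{align*}
R^3\mathcal{R}=&\,R^3\big[(z+w+2\gamma)\mathcal{L}-(2z+3)w\big](w-\mathcal{L})+R^3(\gamma-1)\mathcal{L}(w-\gamma\mathcal{L})\\
&+(\gamma-1)R^2w(2\mathcal{L}-2z-1)\lambda_*^2+R^3z(w-\mathcal{L})^2 .
\end{align*}
The only non-polynomial quantity is $\lambda_*^2$ in the third term. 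By \eqref{lambda2} it equals $R\mathcal{L}$, so every term carries exactly $R^3$. Dividing out yields
\begin{equation*}
\mathcal{R}=\big[(z+w+2\gamma)\mathcal{L}-(2z+3)w\big](w-\mathcal{L})+(\gamma-1)\mathcal{L}(w-\gamma\mathcal{L})+(\gamma-1)w\mathcal{L}(2\mathcal{L}-2z-1)+z(w-\mathcal{L})^2 .
\end{equation*}
This is a quadratic polynomial in $\mathcal{L}$ with coefficients polynomial in $w,z,\gamma$. Incidentally, it confirms that the result is independent of $R$, as announced in the introduction.

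\textbf{Step 2: collect by powers of $\mathcal{L}$.}
\begin{itemize}
\item The $\mathcal{L}^2$ coefficient is $-(z+w+2\gamma)-\gamma(\gamma-1)+2(\gamma-1)w+z=(2\gamma-3)w-\gamma(\gamma+1)$.
\item The $\mathcal{L}^1$ coefficient is $(z+w+2\gamma)w+(2z+3)w+(\gamma-1)w-(\gamma-1)(2z+1)w-2zw$.
\item The $\mathcal{L}^0$ coefficient is $-(2z+3)w^2+zw^2$.
\end{itemize}

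\textbf{Step 3: lower the degree.} Since $\mathcal{L}$ is a root of \eqref{eqL}, substitute $\mathcal{L}^2=(z+w+\gamma)\mathcal{L}-(z+1)w$. This turns $\mathcal{R}$ into an expression that is linear in $\mathcal{L}$.
\begin{itemize}
\item The new coefficient of $\mathcal{L}$ is the old linear coefficient plus $\big[(2\gamma-3)w-\gamma(\gamma+1)\big](z+w+\gamma)$. This should expand to $2(\gamma-1)w^2+(\gamma^2-2\gamma+3)w-\gamma(\gamma+1)(z+\gamma)$, which is $\mathcal{M}$ in \eqref{M}.
\item The new constant term is $-(z+3)w^2-\big[(2\gamma-3)w-\gamma(\gamma+1)\big](z+1)w$. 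Factoring out $-w$ should give exactly $\mathcal{N}$ in \eqref{N}.
\end{itemize}

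I expect no conceptual obstacle here. The only real risk is bookkeeping in Step 3, where several $\gamma$-dependent products must combine into the compact forms \eqref{M}--\eqref{N}. To control this, I would organize the expansion by powers of $w$. As a spot check, I would verify the final identity at a simple parameter value, for instance $\gamma=1$, where $\mathcal{L}=\min\{w,\,z+1\}$ can be computed explicitly.
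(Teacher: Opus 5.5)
Your proposal is correct and follows the paper's own proof: substitute \eqref{eigenvector-r} and \eqref{nabla-L} into \eqref{def-R} (with $\lambda_*^2=R\mathcal{L}$), collect the result as a quadratic in $\mathcal{L}$ with the same coefficients the paper obtains, and then reduce it via $\mathcal{L}^2=(z+w+\gamma)\mathcal{L}-(z+1)w$. The Step 3 expansions you left as ``should'' do work out. The $w$-coefficient is $(2\gamma-3)\gamma-\gamma(\gamma+1)+(2\gamma+3)=\gamma^2-2\gamma+3$, and the $wz$ terms cancel, which gives $\mathcal{M}$. The $w^2$ part of the constant term is $-(z+3)-(2\gamma-3)(z+1)=-2\big((\gamma-1)z+\gamma\big)$, which gives $-w\mathcal{N}$.
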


\begin{proof}
Substituting \eqref{eigenvector-r} and \eqref{nabla-L} into \eqref{def-R}, the Riemann invariant $\mathcal{R}$ can be rewritten as
\begin{align*}
\mathcal{R}(w,z)=&\left[(z+w+2\gamma)\mathcal{L}-(2z+3)w\right](w-\mathcal{L})+(w-\gamma \mathcal{L})(\gamma-1)\mathcal{L} \\
&+(\gamma-1)(2\mathcal{L}-2z-1)w\mathcal{L}+z(w-\mathcal{L})^2\\
=&\left[(2\gamma-3)w-\gamma(\gamma+1)\right]\cdot\mathcal{L}^2+\left[(3-2\gamma)z+w+2\gamma+3\right]\cdot w\mathcal{L}\\
&-(z+3)\cdot w^2.
\end{align*}
Note that the function $\mathcal{L}$ satisfies the quadratic equation \eqref{eqL}, we have
\begin{equation*}
\mathcal{L}^2=(z+w+\gamma)\mathcal{L}-(z+1)w.
\end{equation*}
The proof of the lemma is done by substituting the above equality into the expression of $\mathcal{R}(w,z).$
\end{proof}

\section{Genuine nonlinear}

In this section, we prove that the Riemann invariant $\mathcal{R}(w,z)$ is negative, which implies that the hyperbolic Navier-Stokes equations \eqref{HNS} possesses two genuinely nonlinear eigenvalues. Besides, we have proved that the hyperbolic Navier-Stokes functions \eqref{HNS} in the conservation laws form 
\begin{equation*}
\boldsymbol{u}_t+\boldsymbol{A}(\boldsymbol{u})\boldsymbol{u}_x={ \boldsymbol{g}(\boldsymbol{u})}
\end{equation*}
is strictly hyperbolic near constant equilibrium state and $\boldsymbol{g}(\boldsymbol{u})=\boldsymbol{0}.$ Hence, by the work of B\"{a}rlin in \cite{Barlin}, we obtain that the derivatives of the solutions to hyperbolic Navier-Stokes equations \eqref{HNS} blows up in finite time.

\begin{proposition}\label{prop:main}
Suppose that the adiabatic index $\gamma$ satisfies $1<\gamma\leq\frac53.$ Then the Riemann invariant $\mathcal{R}(w,z)$ satisfies
\begin{equation*}
\mathcal{R}(w,z)<0,\quad\quad\quad \forall z>0,\ \forall w>0,
\end{equation*}
which implies the second eigenvalue $\lambda_2$ and the fourth $\lambda_4$ are genuine nonlinear near equilibrium state $\bar{\boldsymbol{u} }=(1,0,1,0,0)^{\mathrm{T}}.$
\end{proposition}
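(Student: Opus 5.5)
The plan is to fix $z>0$ and $\gamma\in(1,\frac53]$ and regard $\mathcal{R}$ as a function of $w>0$ alone. The starting point is the explicit form of $\mathcal{L}$ in \eqref{L}. Write $B=z+w+\gamma$ and $g(w)=\sqrt{(z-w+\gamma)^2+4w(\gamma-1)}>0$, so that $\mathcal{L}=(B-g)/2$. Substituting this into Lemma \ref{lem:def-R} gives
\begin{equation*}
\mathcal{R}=\tfrac12\left(\mathcal{Q}-\mathcal{M}g\right),\qquad \mathcal{Q}(w):=\mathcal{M}(w)B-2w\mathcal{N}(w),
\end{equation*}
where $\mathcal{Q}$ is a cubic in $w$. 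Multiplying numerator and denominator by the conjugate $\mathcal{Q}+\mathcal{M}g$ and using $g^2=B^2-4(z+1)w$ gives $\mathcal{R}=(\mathcal{Q}^2-\mathcal{M}^2g^2)/(2(\mathcal{Q}+\mathcal{M}g))$. The first computation I would carry out is to check that
\begin{equation*}
\mathcal{Q}^2-\mathcal{M}^2 g^2=-4(\gamma-1)w^2\mathcal{P}(w)
\end{equation*}
for an explicit cubic $\mathcal{P}$. Expanding, $\mathcal{Q}^2-\mathcal{M}^2g^2=4w\big(-\mathcal{M}B\mathcal{N}+w\mathcal{N}^2+(z+1)\mathcal{M}^2\big)$, so I must show that this bracket vanishes at $w=0$ and carries a factor $(\gamma-1)$. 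That yields the three representations (a), (b), (c) announced in the introduction.

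Next I would prove the root-ordering statement (Lemma \ref{lem:roots}). Since $\mathcal{N}$ is linear with positive slope, its single zero is $w_N$. Since $\mathcal{M}$ is a convex quadratic with $\mathcal{M}(0)<0$, it has a unique positive zero $w_M$. The cubic $\mathcal{P}$ has a relevant zero $w_P$, and $w_Q$ is the smallest zero of $\mathcal{Q}$. The needed facts are: $w_N<w_M$; $\mathcal{P}>0$ on $(w_M,\infty)\cap\{\mathcal{Q}>0\}$; and on $(0,w_N]$ the quantities $\mathcal{P}$ and $\mathcal{Q}+\mathcal{M}g$ have the same sign. For the last point, note that $\mathcal{Q}+\mathcal{M}g=2(\mathcal{M}\mathcal{L}_+-w\mathcal{N})$, where $\mathcal{L}_+$ is the larger root of \eqref{eqL}. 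Each ordering claim I would verify by evaluating one polynomial at the other's root, for example computing the sign of $\mathcal{M}(w_N)$ in closed form in $z,\gamma$. This is exactly where the restriction $\gamma\le\frac53$ should enter, e.g. through signs of coefficients such as $2\gamma-3$ or $5-3\gamma$.

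With these facts, the case analysis on $(0,\infty)=(0,w_N]\cup(w_N,w_M]\cup\mathcal{I}_-\cup\mathcal{I}_+$ runs as follows.
\begin{itemize}
\item On $(w_N,w_M]$ I use (a): $\mathcal{M}\le0$, $\mathcal{L}>0$ (both roots of \eqref{eqL} are positive, since their sum and product are positive) and $w\mathcal{N}>0$, so $\mathcal{R}<0$.
\item On $\mathcal{I}_-$ I use (b): $\mathcal{M}>0$, $g>0$ and $\mathcal{Q}\le0$, so $\mathcal{R}<0$.
\item On $\mathcal{I}_+$ I use (c): the denominator $\mathcal{M}g+\mathcal{Q}$ is positive, so it suffices that $\mathcal{P}>0$ there.
\item On $(0,w_N]$ I also use (c), with the same-sign fact for $\mathcal{P}$ and $\mathcal{Q}+\mathcal{M}g$ (treating the possible single degenerate point by continuity of (b)).
\end{itemize}
Finally, $\mathcal{R}\ne0$ together with \eqref{parellel} gives $\nabla\lambda_*\cdot\boldsymbol{r}_*\neq0$, i.e. genuine nonlinearity of $\lambda_2$ and $\lambda_4$.

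The main obstacle is the sign control of the cubics $\mathcal{P}$ and $\mathcal{Q}$ uniformly in $z>0$ and $\gamma\in(1,\frac53]$, including the ordering of their zeros relative to $w_N$ and $w_M$. There I would first try writing $\mathcal{P}$ as a polynomial in $(w-w_M)$ or $(w-w_N)$ with coefficients that are manifestly signed for $\gamma\le\frac53$. Failing that, I would compare $\mathcal{P}$ with the product $\mathcal{M}\mathcal{N}$ plus a nonnegative remainder, relegating the detailed properties of $\mathcal{Q}$ to an appendix-type lemma.
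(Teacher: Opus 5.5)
Your reduction is the same as the paper's. The representations (a)--(c), the identity $\mathcal{Q}^2-\mathcal{M}^2g^2=-4(\gamma-1)w^2\mathcal{P}$, and the assignment of representations to $(0,w_N]$, $(w_N,w_M]$, $\mathcal{I}_-$, $\mathcal{I}_+$ all match, and your treatment of the two middle pieces is complete. Your idea for $w_N<w_M$ also works, and is a bit more direct than the paper's: with $\tilde z=(\gamma-1)z+\gamma$ one gets $\mathcal{M}(w_N)=-\frac{\gamma(\gamma+1)}{2(\gamma-1)\tilde z^{2}}\,h(\tilde z)$, where $h$ is the cubic of Lemma~\ref{lem:wM-wN}, which is positive for $\tilde z>\gamma$.

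\textbf{The gap is in the two outer pieces.} They rest on three facts you list but do not prove: $\mathcal{P}<0$ on $(0,w_N]$, $\mathcal{Q}<0$ on $(0,w_N]$, and $\mathcal{P}>0$ wherever $w>w_M$ and $\mathcal{Q}>0$. You also never show that $\mathcal{P}$ changes sign only once; without that, $w_P$ does not control the sign of $\mathcal{P}$ on an interval. The routes you suggest are not viable as stated. $w_Q$ is the smallest root of a general cubic, so there is nothing closed-form to evaluate $\mathcal{P}$ at. Re-expanding $\mathcal{P}$ about $w_M$ (which involves $\sqrt{D_1}$) or about $w_N$ in search of signed coefficients is a guess. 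Moreover, $\gamma\le\frac53$ is not used anywhere in this part: everything below holds for every $\gamma>1$, so looking for where it ``enters'' is a red herring.

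\textbf{Two ideas close the gap.}

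First, $\mathcal{P}$ is strictly increasing in $w$. Indeed $\partial_w\mathcal{P}=12w^2-2p_2w+p_1$ has discriminant $-144[(\gamma-1)^2z^2+(\gamma-1)(3\gamma+1)z+\gamma^2-1]<0$. Since also $\mathcal{P}(0)=-p_0<0$, $\mathcal{P}$ has a unique zero $w_P$ and is negative exactly on $(0,w_P)$.

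Second, you never need to locate the roots explicitly. The identity you already derived, $4(\gamma-1)w^2\mathcal{P}=\mathcal{M}^2g^2-\mathcal{Q}^2$, gives the sign of $\mathcal{P}$ at the relevant points for free:
\begin{itemize}
\item At $w_M$ one has $\mathcal{Q}(w_M)=-2w_M\mathcal{N}(w_M)<0$, because $w_M>w_N$. Hence $\mathcal{P}(w_M)=-\mathcal{Q}(w_M)^2/(4(\gamma-1)w_M^2)<0$.
\item At any positive zero $w_0$ of $\mathcal{Q}$, $\mathcal{P}(w_0)=\mathcal{M}^2g^2/(4(\gamma-1)w_0^2)\ge0$.
\end{itemize}
Monotonicity then gives $w_M<w_P\le w_Q$. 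Consequently, on $(0,w_N]$ you have $\mathcal{M}<0$, $\mathcal{Q}<0$ and $\mathcal{P}<0$, so $\mathcal{M}g+\mathcal{Q}<0$ and (c) yields $\mathcal{R}<0$. On $\mathcal{I}_+$, the condition $\mathcal{Q}>0$ forces $w>w_Q\ge w_P$, so $\mathcal{P}>0$ and (c) again yields $\mathcal{R}<0$.

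\textbf{Two smaller points.}
\begin{itemize}
\item Your rewriting $\mathcal{Q}+\mathcal{M}g=2(\mathcal{M}\mathcal{L}_+-w\mathcal{N})$ does not fix the sign on $(0,w_N]$, since there $\mathcal{M}\mathcal{L}_+<0\le-w\mathcal{N}$.
\item A zero of $\mathcal{Q}+\mathcal{M}g$ inside $(0,w_N]$ cannot be ``treated by continuity''. At such a point (b) gives $\mathcal{R}=-\mathcal{M}g>0$, so it would be a counterexample. Such points must be excluded, and $\mathcal{Q}<0$ on $(0,w_N]$ is exactly what excludes them.
\end{itemize}
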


Regarding $\mathcal{M}(w,z)$ as a function of $w$ for fixed $z > 0$, it is quadratic. Its associated equation has two distinct real roots: one positive and one negative. Since $w$ is positive, we ignore the negative one and denote the positive one by $w_M(z).$ Meanwhile, we  denote the root of $\mathcal{N}(w,z)$ as a function of $w$ for fixed $z > 0$ as $w_N(z).$ That is to say
\begin{equation*}
\mathcal{M}(w_M(z),z)=0,\quad\quad\quad\mathcal{N}(w_N(z),z)=0,\quad\quad\quad \forall z>0.
\end{equation*}

The calculation of the Riemann invariant $\mathcal{R}(w,z)$ (defined by \eqref{def-R}) is very complicated because the signs of $\mathcal{M}(w,z)$ and $\mathcal{N}(w,z)$ change. Hence, we compare the magnitudes of the roots $w_{M}(z)$ and $w_N(z)$ as a function of $z.$
\begin{lemma}\label{lem:wM-wN}
For all $1<\gamma\leq\frac53,$ the roots $w_{M}(z)$ and $w_N(z)$ satisfy
\begin{equation}\label{wM-wN}
0<w_N(z)<w_M(z),\quad\quad\quad \forall z>0.
\end{equation}
\end{lemma}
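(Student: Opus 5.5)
Since $w$ ranges over $(0,+\infty)$ and $\mathcal{M}(\cdot,z)$ is an upward-opening quadratic, I would reduce \eqref{wM-wN} to a single sign check, namely $\mathcal{M}(w_N(z),z)<0$.

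\emph{Step 1: locate the roots.} By \eqref{N}, $\mathcal{N}$ is linear in $w$ with positive slope $2\left((\gamma-1)z+\gamma\right)$ for $\gamma>1$, $z>0$. Hence
\begin{equation*}
w_N(z)=\frac{\gamma(\gamma+1)(z+1)}{2D},\qquad D:=(\gamma-1)z+\gamma>0,
\end{equation*}
which already gives $w_N(z)>0$. By \eqref{M}, $\mathcal{M}(\cdot,z)$ has leading coefficient $2(\gamma-1)>0$ and $\mathcal{M}(0,z)=-\gamma(\gamma+1)(z+\gamma)<0$. So its roots have opposite signs, $w_M(z)$ is the positive one, and $\mathcal{M}(w,z)<0$ exactly for $0\le w<w_M(z)$. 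It therefore suffices to show $\mathcal{M}(w_N(z),z)<0$ for every $z>0$.

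\emph{Step 2: clear denominators.} Write $K=\gamma(\gamma+1)$ and substitute $w=K(z+1)/(2D)$ into \eqref{M}. Multiplying by the positive factor $2D^2/K$, the claim becomes
\begin{equation*}
F(z):=(\gamma-1)K(z+1)^2+(\gamma^2-2\gamma+3)(z+1)D-2D^2(z+\gamma)<0,\qquad z>0 .
\end{equation*}

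\emph{Step 3: expand $F$ as a cubic in $z$ and check the sign of every coefficient.} This is the only computational step, and I expect it to be the main (if routine) obstacle. A preliminary expansion, using $D^2=(\gamma-1)^2z^2+2\gamma(\gamma-1)z+\gamma^2$, gives the following coefficients:
\begin{itemize}
\item $z^3$: $-2(\gamma-1)^2$;
\item $z^2$: $(\gamma-1)\left[\gamma^2+\gamma+\gamma^2-2\gamma+3-2\gamma(\gamma+1)\right]=-3(\gamma-1)^2$;
\item $z^1$: $2(\gamma-1)K+(\gamma^2-2\gamma+3)(2\gamma-1)-2\gamma^2(2\gamma-1)=-3(\gamma-1)^2$;
\item $z^0$: $\gamma\left[(\gamma^2-1)+(\gamma^2-2\gamma+3)-2\gamma^2\right]=-2\gamma(\gamma-1)$.
\end{itemize}
I would recheck these carefully. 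If they hold, then
\begin{equation*}
F(z)=-(\gamma-1)\left[2(\gamma-1)z^3+3(\gamma-1)z^2+3(\gamma-1)z+2\gamma\right],
\end{equation*}
which is strictly negative for all $z\ge0$ and all $\gamma>1$.

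This would prove \eqref{wM-wN} for every $\gamma>1$, so the restriction $\gamma\le\frac53$ would not be needed for this lemma. If the expansion instead produced a coefficient of indefinite sign, my fallback would be to use $\gamma\le\frac53$ explicitly, bounding that coefficient on $(1,\frac53]$, or dominating it by the negative neighbouring terms.
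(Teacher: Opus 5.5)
Your proof is correct, and the Step 3 coefficients check out. Expanding gives exactly $F(z)=-(\gamma-1)\left[2(\gamma-1)z^3+3(\gamma-1)z^2+3(\gamma-1)z+2\gamma\right]$. So the lemma holds for every $\gamma>1$, as you observe; the paper's argument does not use $\gamma\le\frac53$ either.

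The paper reaches the same polynomial by a more roundabout route. It works in the variable $\tilde z=(\gamma-1)z+\gamma$ and writes $w_M$ via the quadratic formula, so that $w_M-w_N=h_-(\tilde z)/(4(\gamma-1)\tilde z)$. It then rationalizes by multiplying with the conjugate $h_+(\tilde z)$, which has to be checked separately to be positive. This gives $h_-h_+=4\gamma(\gamma+1)h(\tilde z)$ with $h(\tilde z)=2\tilde z^3-3(\gamma+1)\tilde z^2+3(\gamma^2+1)\tilde z-\gamma(\gamma+1)$. Finally, $h>0$ for $\tilde z>\gamma$ follows from $h'>0$ (by completing the square) together with $h(\gamma)=2\gamma(\gamma-1)^2>0$.

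The two computations are really the same. One checks that $h_+/(4(\gamma-1)\tilde z)=w_N-w_-$, where $w_-<0$ is the other root of $\mathcal{M}(\cdot,z)$. Hence the conjugate product is just $-\mathcal{M}(w_N,z)$ up to a positive factor, and indeed $F(z)=-h(\tilde z)/(\gamma-1)$.

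Your approach, evaluating $\mathcal{M}$ directly at $w_N$, avoids the square root and the auxiliary positivity check for $h_+$. Because you expand in $z$ rather than in $\tilde z$, all coefficients come out with a single sign, so no monotonicity argument is needed. The paper's shift to $\tilde z$ is what produces mixed-sign coefficients and forces the extra derivative step.
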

\begin{figure}[htpb]
\centering
\includegraphics[width=0.8\textwidth]{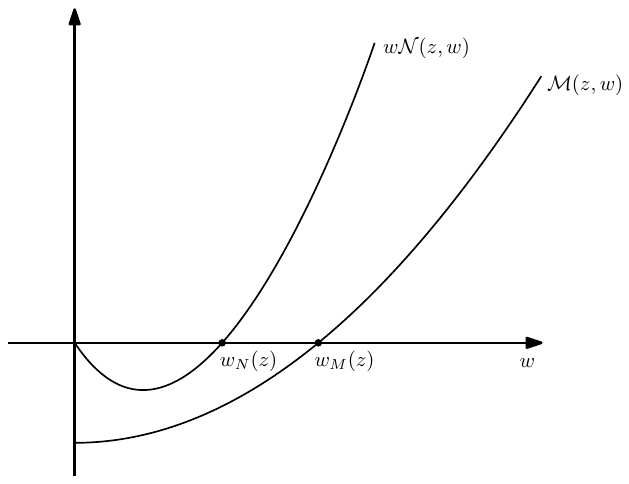}
\caption{The roots of $\mathcal{M}$ and $\mathcal{N}$}
\end{figure}
 
\begin{proof}
To begin with, let
\begin{equation*}
\tilde{z}=(\gamma-1)z+\gamma,
\end{equation*}
then we have $\tilde{z}>\gamma>1.$ The functions $\mathcal{M}$ and $\mathcal{N}$ can be rewritten as
\begin{align*}
\mathcal{M}(w,z)=&2(\gamma-1)w^2+(\gamma^2-2\gamma+3)w-\frac{\gamma(\gamma+1)}{\gamma-1}\left(\tilde{z}+\gamma(\gamma-2)\right), \\
\mathcal{N}(w,z)=&2\tilde{z}w-\frac{\gamma(\gamma+1)}{\gamma-1}\left(\tilde{z}-1\right).
\end{align*}
The roots $w_{M}(z)$ and $w_N(z)$ satisfy
\begin{align}
w_M(z)=&\frac{\sqrt{D_1(\tilde{z})}-(\gamma^2-2\gamma+3)}{4(\gamma-1)}, \label{wM}\\
w_N(z)=&\frac{\gamma(\gamma+1)}{2(\gamma-1)}\cdot \frac{\tilde{z}-1}{\tilde{z} },\label{wN}
\end{align}
where 
\begin{align*}
D_1(\tilde{z})=&(\gamma^2-2\gamma+3)^2+8\gamma(\gamma+1)\left(\tilde{z}+\gamma(\gamma-2)\right)  \\
=&(\gamma^2-2\gamma+3)^2+8\gamma(\gamma+1)\left((\gamma-1)z+\gamma(\gamma-1)\right)>0
\end{align*}
 is the discriminant of $\mathcal{M}(w,z)$ considered as a quadratic function in $w$.

Next, define
\begin{align*}
h_{\pm}(\tilde{z})=&\tilde{z}\sqrt{D_1(\tilde{z})}\pm\left[(\gamma^2-2\gamma+3)\tilde{z}+2\gamma(\gamma+1)\left(\tilde{z}-1\right)\right], \\
h(\tilde{z})=&2\tilde{z}^3-3(\gamma+1)\tilde{z}^2+3(\gamma^2+1)\tilde{z}-\gamma(\gamma+1).
\end{align*}
Notice that 
\begin{equation}\label{te-1}
w_M(z)-w_N(z)=\frac{1}{4(\gamma-1)\tilde{z} }h_-(\tilde{z})
\end{equation}
by \eqref{wM} and \eqref{wN}, and meanwhile
\begin{align*}
h_-(\tilde{z})h_+(\tilde{z})=&\tilde{z}^2D_1(\tilde{z})-\left[(\gamma^2-2\gamma+3)\tilde{z}+2\gamma(\gamma+1)\left(\tilde{z}-1\right)\right]^2\\
=&4\gamma(\gamma+1)h(\tilde{z}).
\end{align*}
Multiplying \eqref{te-1} by $(\gamma-1)\tilde{z} h_+(\tilde{z}),$ we have
\begin{equation}\label{eq-im}
(\gamma-1)\tilde{z}h_+(\tilde{z})\left(w_M(z)-w_N(z)\right)=\gamma(\gamma+1)h(\tilde{z}).
\end{equation}

Lastly, we aim to prove $w_M(z)$ is greater than $w_N(z).$ To this end, by the fact that
\begin{equation*}
\gamma>1,\quad\quad\quad\tilde{z}>1,\quad\quad\quad h_+(\tilde{z})>0, 
\end{equation*}
it suffices to show that $h(\tilde{z})>0$ for all $z>0.$ Notice that
\begin{equation*}
h(\gamma)=2\gamma(\gamma-1)^2>0,
\end{equation*}
and that 
\begin{align*}
h^\prime(\tilde{z})=&6\tilde{z}^2-6(\gamma+1)\tilde{z}+3(\gamma^2+1)\\ 
=&6\left[\left(\tilde{z}-\frac{\gamma+1}{2}  \right)^2+\left(\frac{\gamma-1}{2} \right)^2\right]\\
>&0.
\end{align*}
Hence, we have
\begin{equation*}
h(\tilde{z})>h(\gamma)>0
\end{equation*}
by the fact that $\tilde{z}=(\gamma-1)z+\gamma>\gamma.$ Therefor, the lemma holds by \eqref{eq-im}.
\end{proof}
 
We now find two new expressions for the Riemann invariant $\mathcal{R}(w,z).$ To do this, we split $\mathcal{L}(w,z)$ defined in \eqref{L} into two parts:
\begin{equation*}
\mathcal{L}(w,z)=\frac12 f(w,z)-\frac12 g(w,z),
\end{equation*}
where
\begin{equation}\label{g}
f(w,z)=z+w+\gamma,\quad\quad\quad g(w,z)=\sqrt{(z+w+\gamma)^2-4(z+1)w}.
\end{equation}
Meanwhile, let
\begin{align*}
\mathcal{Q}(w,z):=&\mathcal{M}(w,z)f(w,z)-2w\mathcal{N}(w,z), \\
\mathcal{P}(w,z):=&\frac{1}{4(\gamma-1)w^2}\left(\mathcal{M}(w,z)^2g(w,z)^2-\mathcal{Q}(w,z)^2\right).
\end{align*}
Without ambiguity, we denote briefly the functions $f(w,z),$ $g(w,z),$ $\mathcal{M}(w,z),$ $\mathcal{N}(w,z),$ $\mathcal{R}(w,z),$ $\mathcal{Q}(w,z),$ $\mathcal{P}(w,z)$ as $f,$ $g,$ $\mathcal{M},$ $\mathcal{N},$ $\mathcal{R},$ $\mathcal{Q},$ $\mathcal{P}.$ Then it holds that
\begin{equation}\label{relation1}
\mathcal{R}=\frac12\left(\mathcal{Q}-\mathcal{M}g\right)
\end{equation}
and 
\begin{equation}\label{relation2}
2(\gamma-1)w^2\mathcal{P}=-\left(\mathcal{M}g+\mathcal{Q}\right)\mathcal{R}.
\end{equation}
 
The following two lemmas study the properties of the functions $\mathcal{P}(w,z)$ and $\mathcal{Q}(w,z).$ 

\begin{lemma}\label{lem:P}
Suppose that the adiabatic index $\gamma$ satisfies $1<\gamma\leq\frac53.$ Fixing any $z>0,$ we obtain that the function $\mathcal{P}(w,z)$ is a cubic function in the variable $w$:
\begin{equation}\label{P-}
\mathcal{P}(w,z)=4w^3-p_2(z)w^2+p_1(z)w-p_0(z),
\end{equation}
where the coefficients $p_i(z)\ (i=0,1,2)$ are positive, and
\begin{align*}
p_2(z)=&6(\gamma+1)z+12, \\
p_1(z)=&6(\gamma^2+1)z^2+3(3\gamma^2+2\gamma+3)z+3(\gamma^2+3),\\
p_0(z)=&\gamma(\gamma+1)\left[2z^3+3(\gamma+1)z^2+3(\gamma+1)z+2\gamma\right].
\end{align*}
Besides, the function $\mathcal{P}(w,z)$ is monotonically increasing from negative to positive infinity with respect to the independent variable $w,$ i.e.,
\begin{equation*}
\frac{\partial}{\partial w}\mathcal{P}(w,z)>0,\quad\quad\quad \forall w>0,\quad \forall z>0,
\end{equation*}
and the function $\mathcal{P}(w,z),$ as a function of $w$, has only one root $w_P(z),$ i.e.,
\begin{equation*}
\mathcal{P}(w_P(z),z)=0,\quad\quad\quad \forall z>0.
\end{equation*}
\end{lemma}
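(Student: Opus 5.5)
The plan is to compute $\mathcal{P}$ directly from its definition, using only the identities $g^2=f^2-4(z+1)w$ and $\mathcal{Q}=\mathcal{M}f-2w\mathcal{N}$. Substituting both into the numerator gives
\begin{align*}
\mathcal{M}^2g^2-\mathcal{Q}^2
&=\mathcal{M}^2f^2-4(z+1)w\mathcal{M}^2-\left(\mathcal{M}f-2w\mathcal{N}\right)^2\\
&=4w\left[-(z+1)\mathcal{M}^2+\mathcal{M}\mathcal{N}f-w\mathcal{N}^2\right].
\end{align*}
Hence
\begin{equation*}
\mathcal{P}=\frac{1}{(\gamma-1)w}\,B(w),\qquad B(w):=-(z+1)\mathcal{M}^2+\mathcal{M}\mathcal{N}f-w\mathcal{N}^2.
\end{equation*}
The first structural step is to show that $B(0)=0$, so that the remaining factor $w$ cancels. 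At $w=0$ we have $\mathcal{M}=-\gamma(\gamma+1)(z+\gamma)$, $\mathcal{N}=-\gamma(\gamma+1)(z+1)$ and $f=z+\gamma$. Therefore $-(z+1)\mathcal{M}+\mathcal{N}f=0$, and so $B(0)=\mathcal{M}(0)\cdot 0=0$.

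Next comes the degree count. The polynomial $B$ has degree $4$ in $w$. Its $w^4$ coefficient is
\begin{equation*}
-4(z+1)(\gamma-1)^2+4(\gamma-1)\bigl((\gamma-1)z+\gamma\bigr)=4(\gamma-1),
\end{equation*}
so $\mathcal{P}$ is a cubic with leading coefficient $4$, as claimed in \eqref{P-}. The remaining coefficients are found by expanding $B(w)/w$ with $\mathcal{M}$ and $\mathcal{N}$ from \eqref{M}--\eqref{N}, and checking that each coefficient carries a factor $(\gamma-1)$. This factor then cancels against the prefactor and yields $p_2$, $p_1$ and $p_0$.

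This expansion is the main obstacle. It is purely mechanical but error-prone, so I would organize it by the powers $w^3$, $w^2$ and $w^1$ of $B$. I would cross-check each coefficient by evaluating at a convenient special value such as $\gamma=1$, where the bracket must vanish identically. The positivity of $p_0$, $p_1$ and $p_2$ is then immediate, since every term is a product of positive quantities when $z>0$ and $\gamma>1$.

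For monotonicity, I would write
\begin{equation*}
\partial_w\mathcal{P}=12w^2-2p_2w+p_1.
\end{equation*}
Its discriminant is proportional to $p_2^2-12p_1$, and a direct computation gives
\begin{equation*}
p_2^2-12p_1=-36(\gamma-1)^2z^2-36(3\gamma+1)(\gamma-1)z-36(\gamma^2-1)<0
\end{equation*}
for $\gamma>1$ and $z>0$. Hence $\partial_w\mathcal{P}>0$ for all $w$, and in particular for all $w>0$. The restriction $\gamma\le\frac53$ is not needed for this part.

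Finally, $\mathcal{P}(0,z)=-p_0(z)<0$ and $\mathcal{P}\to+\infty$ as $w\to\infty$. Together with strict monotonicity, the intermediate value theorem gives exactly one root $w_P(z)>0$, and $\mathcal{P}$ increases from negative to positive values as stated.
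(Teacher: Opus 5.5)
Your proposal is correct and follows essentially the same route as the paper: substitute $g^2=f^2-4(z+1)w$, cancel a factor of $w$, and expand to obtain the cubic. The monotonicity then comes from the negative discriminant of $\partial_w\mathcal{P}$, and uniqueness of the root from $\mathcal{P}(0,z)=-p_0<0$ together with the intermediate value theorem.

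The paper makes the expansion you flag as the main obstacle lighter. It writes $B=\mathcal{M}\,[f\mathcal{N}-(z+1)\mathcal{M}]-w\mathcal{N}^2$ and uses the identity $f\mathcal{N}-(z+1)\mathcal{M}=w\,[2w+2(\gamma-1)z^2-(3-\gamma)(z+1)]$, which is the explicit form of your observation $B(0)=0$. This reduces the work to expanding $[2w+2(\gamma-1)z^2-(3-\gamma)(z+1)]\,\mathcal{M}-\mathcal{N}^2$. That expansion does yield $(\gamma-1)$ times the stated $p_2$, $p_1$ and $p_0$.
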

 
\begin{proof}
To begin with, we deduce the expression of $\mathcal{P}.$ By the definition of $\mathcal{P}(w,z)$ and $\mathcal{Q}(w,z)$, one has
\begin{align}
(\gamma-1)\mathcal{P}=&\frac{1}{4w^2}\left[\mathcal{M}^2g^2-\left(\mathcal{M}f-2w\mathcal{N}\right)^2\right] \nonumber\\
=&\frac{1}{w^2}\left[-\frac14(f^2-g^2)\mathcal{M}+wf\mathcal{N}\right]\mathcal{M}-\mathcal{N}^2.\label{te-2}
\end{align}
Notice that 
\begin{equation*}
f^2-g^2=4(z+1)w
\end{equation*}
by \eqref{g} and 
\begin{equation*}
f\mathcal{N}-(z+1)\mathcal{M}=w\left[2w+2(\gamma-1)z^2-(3-\gamma)(z+1)\right].
\end{equation*}
Substituting the above two equations into \eqref{te-2}, one has 
\begin{align*}
(\gamma-1)\mathcal{P}=&\frac{1}{w} \left[f\mathcal{N}-(z+1)\mathcal{M}\right]\mathcal{M}-\mathcal{N}^2 \\
=&\left[2w+2(\gamma-1)z^2-(3-\gamma)(z+1)\right]\mathcal{M}-\mathcal{N}^2\\
=&(\gamma-1)\left(4w^3-p_2(z)w^2+p_1(z)w-p_0(z)\right)
\end{align*}
by the definition of $\mathcal{M}$ and $\mathcal{N}$ in Lemma \ref{lem:def-R}. This means \eqref{P-} holds.

Next, we prove the monotonicity of $\mathcal{P}(w,\cdot).$ A straightforward calculation shows that
\begin{equation*}
\frac{\partial}{\partial w}\mathcal{P}(w,z)=12w^2-2p_2(z)w+p_1(z),
\end{equation*}
whose discriminant $D_2(z)$ satisfies
\begin{align*}
D_2(z)=&4p_2(z)^2-48p_1(z)\\
=&-144\left[(\gamma-1)^2z^2+(\gamma-1)(3\gamma+1)z+(\gamma^2-1)\right] \\
<&0
\end{align*}
for all $z>0$ and $\gamma\in(1,\frac53].$ Hence, 
\begin{equation*}
\frac{\partial}{\partial w}\mathcal{P}(w,z)=12\left(w-\frac1{12}p_2(z) \right)^2-\frac1{48}D_2(z)>0.
\end{equation*}
This means that the function $\mathcal{P}(w,z)$ is monotonically increasing with respect to the independent variable $w.$

Finally, we prove that the function $\mathcal{P}(w,z),$ as a function of $w$, has only one root. Notice that 
\begin{equation*}
\mathcal{P}(0,z)=-p_0(z)<0,\quad\quad\quad \lim\limits_{w\to\infty}\mathcal{P}(w,z)=+\infty.
\end{equation*}
By the Intermediate Value Theorem and monotonicity of $\mathcal{P}(w,z),$ the function $\mathcal{P}(w,z),$ as a function of $w$, has exactly one zero $w_P(z).$
\end{proof}

\begin{lemma}\label{lem:Q}
Suppose that the adiabatic index $\gamma$ satisfies $1<\gamma\leq\frac53.$ Fixing any $z>0,$ we obtain that the function $\mathcal{Q}(w,z)$ is a cubic function in the variable $w$:
\begin{equation}\label{Q-}
\mathcal{Q}(w,z)=2(\gamma-1)w^3-q_2(z)w^2+q_1(z)w-q_0(z),
\end{equation}
where the coefficients $q_i(z)\ (i=0,1,2)$ are positive, and
\begin{align*}
q_2(z)=&2(\gamma-1)z+\left(8\gamma-3\gamma^2-3\right), \\
q_1(z)=&(2\gamma^2-\gamma+3)z+\gamma(5-\gamma),\\
q_0(z)=&\gamma(\gamma+1)(z+\gamma)^2.
\end{align*}
Besides, the function $\mathcal{Q}(w,z),$ as a function of $w$, has one to three roots. We denote the first(smallest) root of $\mathcal{Q}(w,z)$ as $w_Q(z),$ which means 
\begin{equation}\label{zero-wQ}
\mathcal{Q}(w,z)<0,\quad\quad\quad \forall w\in(0,w_Q(z)),\quad \forall z>0.
\end{equation}
\end{lemma}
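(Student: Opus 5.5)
Expanding the definition $\mathcal{Q}=\mathcal{M}f-2w\mathcal{N}$ with $f=z+w+\gamma$ and the expressions \eqref{M}, \eqref{N} of Lemma \ref{lem:def-R}, and collecting powers of $w$, gives the cubic and its coefficients; the positivity of the $q_i(z)$ then follows; and a sign argument yields the statement about roots. I will do these three steps in order.

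\emph{Step 1: the cubic.} The product $\mathcal{M}f$ contributes:
\begin{itemize}
\item $2(\gamma-1)w^3$ to the cubic term;
\item $\left[2(\gamma-1)(z+\gamma)+(\gamma^2-2\gamma+3)\right]w^2$ to the quadratic term;
\item $\left[(\gamma^2-2\gamma+3)(z+\gamma)-\gamma(\gamma+1)(z+\gamma)\right]w$ to the linear term;
\item $-\gamma(\gamma+1)(z+\gamma)^2$ to the constant term.
\end{itemize}
The term $-2w\mathcal{N}$ contributes $-4\left((\gamma-1)z+\gamma\right)w^2+2\gamma(\gamma+1)(z+1)w$.

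Adding these, the $w^2$-coefficient is $-2(\gamma-1)z-(8\gamma-3\gamma^2-3)$, after the arithmetic $2\gamma(\gamma-1)+\gamma^2-2\gamma+3-4\gamma=3\gamma^2-8\gamma+3$. The $w$-coefficient is $(3-3\gamma)(z+\gamma)+2\gamma(\gamma+1)(z+1)$. Expanding it gives $(2\gamma^2-\gamma+3)z+\gamma(5-\gamma)$, consistent with the stated $q_1$. The constant term is $-q_0$. This is routine bookkeeping.

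\emph{Step 2: positivity of the coefficients.}
\begin{itemize}
\item $q_0>0$ is immediate.
\item $q_1>0$ because $2\gamma^2-\gamma+3>0$ always and $5-\gamma>0$ for $\gamma\le\frac53$.
\item $q_2>0$ needs $8\gamma-3\gamma^2-3\ge0$ on $(1,\frac53]$. The roots of $3\gamma^2-8\gamma+3$ are $(4\pm\sqrt7)/3$, and $(4+\sqrt7)/3\approx2.22>\frac53$ while $(4-\sqrt7)/3<1$. So $8\gamma-3\gamma^2-3>0$ on the whole range, and adding $2(\gamma-1)z>0$ gives $q_2>0$.
\end{itemize}
This is the only place where the restriction $\gamma\le\frac53$ enters, and I expect it to be the main (though mild) point of care.

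\emph{Step 3: roots.} The leading coefficient $2(\gamma-1)$ is positive, so $\mathcal{Q}(w,z)\to+\infty$ as $w\to+\infty$, while $\mathcal{Q}(0,z)=-q_0(z)<0$. By the intermediate value theorem there is at least one positive root. A cubic has at most three real roots, so there are one to three positive roots. (By Descartes' rule the sign pattern $+,-,+,-$ forces no negative roots and either one or three positive roots; this is consistent but not needed.) Define $w_Q(z)$ as the infimum of the positive zeros; it is attained and positive since $\mathcal{Q}(0,z)<0$. By continuity, $\mathcal{Q}<0$ on $(0,w_Q(z))$, which is \eqref{zero-wQ}.
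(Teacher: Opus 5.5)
Your proposal is correct and follows the same route as the paper. You expand $\mathcal{Q}=\mathcal{M}f-2w\mathcal{N}$ to obtain the cubic, then combine $\mathcal{Q}(0,z)=-q_0(z)<0$, the positive leading coefficient $2(\gamma-1)$ and the intermediate value theorem to get one to three roots with $\mathcal{Q}<0$ on $(0,w_Q(z))$; your coefficient bookkeeping and your check that $q_1,q_2>0$ for $1<\gamma\le\frac53$ (which the paper asserts without verifying) are both right.
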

 
\begin{proof}
Substituting the definition of $f(w,z),$ $\mathcal{M}(w,z)$ and $\mathcal{N}(w,z)$ into the definition of $\mathcal{Q},$ one has \eqref{Q-}. Notice that 
\begin{equation*}
\mathcal{Q}(0,z)=-q_0(z)<0,\quad\quad\quad \lim\limits_{w\to\infty}\mathcal{Q}(w,z)=+\infty.
\end{equation*}
By the Intermediate Value Theorem and the fact that $\mathcal{Q}(w,z)$ is a cubic function in the variable $w,$ the function $\mathcal{Q}(w,z),$ as a function of $w$, must have one to three zeros and \eqref{zero-wQ} holds.
\end{proof}
 
Actually, the function $\mathcal{Q}(w,z)$ must satisfy one of the following two conclusions:
\begin{enumerate}
\item [(a)] the function $\mathcal{Q}(w,z)$ is monotonically increasing with respect to the independent variable $w$;
\item [(b)] the function $\mathcal{Q}(w,z)$ first increases, then decreases, and finally increases again with respect to the independent variable $w.$
\end{enumerate}
In Lemma \ref{lem:mono-Q} in the appendix, we provide a necessary and sufficient condition for (a) to hold.

Then we give the relationship of the roots $w_N(z),$ $w_M(z),$ $w_P(z),$ $w_Q(z).$
\begin{lemma}\label{lem:roots}
Suppose that the adiabatic index $\gamma$ satisfies $1<\gamma\leq\frac53.$ Then the roots $w_N(z),$ $w_M(z),$ $w_P(z),$ $w_Q(z)$ of functions  $\mathcal{N}(w,z),$ $\mathcal{M}(w,z),$  $\mathcal{P}(w,z)$ $\mathcal{Q}(w,z)$ satisfy
\begin{equation}\label{relation}
0<w_N(z)<w_M(z)<w_P(z)\leq w_Q(z),\quad\quad\quad \forall z>0.
\end{equation}
\end{lemma}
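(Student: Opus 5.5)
The first inequality $0<w_N(z)<w_M(z)$ is Lemma \ref{lem:wM-wN}, so only $w_M(z)<w_P(z)$ and $w_P(z)\le w_Q(z)$ need proof. Both will come from sign information at special points, combined with the monotonicity of $\mathcal{P}$ in $w$ (Lemma \ref{lem:P}) and the sign of $\mathcal{Q}$ on $(0,w_Q(z))$ (Lemma \ref{lem:Q}).

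\emph{Step 1: $w_M<w_P$.} Since $\mathcal{P}(\cdot,z)$ is strictly increasing with unique zero $w_P(z)$, it suffices to show $\mathcal{P}(w_M(z),z)<0$. In the proof of Lemma \ref{lem:P} we have
\[
(\gamma-1)\mathcal{P}=\bigl[2w+2(\gamma-1)z^2-(3-\gamma)(z+1)\bigr]\mathcal{M}-\mathcal{N}^2 .
\]
At $w=w_M$ this becomes $(\gamma-1)\mathcal{P}(w_M,z)=-\mathcal{N}(w_M,z)^2$. By Lemma \ref{lem:wM-wN}, $w_M>w_N$, and $\mathcal{N}$ is strictly increasing in $w$ because its slope $2((\gamma-1)z+\gamma)$ is positive. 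Hence $\mathcal{N}(w_M,z)>0$, so $\mathcal{P}(w_M,z)<0$, which gives $w_M<w_P$.

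\emph{Step 2: $w_P\le w_Q$.} I would show that $\mathcal{P}(w,z)<0$ for every $w\in(0,w_Q(z)]$; monotonicity of $\mathcal{P}$ then forces $w_P\ge w_Q$ to fail, i.e.\ $w_P\le w_Q$ would follow once we rule out $w_P>w_Q$ correctly (see below for the direction). By definition,
\[
4(\gamma-1)w^2\mathcal{P}=\mathcal{M}^2g^2-\mathcal{Q}^2 .
\]
At $w=w_Q$ this equals $\mathcal{M}(w_Q)^2g(w_Q)^2\ge0$, so $\mathcal{P}(w_Q,z)\ge0$. Since $\mathcal{P}$ is increasing with unique zero $w_P$, this gives $w_P\le w_Q$ directly, which is exactly the required direction. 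Equality occurs only if $\mathcal{M}(w_Q)=0$.

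This second step looks almost free, so the real work is in checking the identities being used. First, the formula $(\gamma-1)\mathcal{P}=[\dots]\mathcal{M}-\mathcal{N}^2$ must hold as a polynomial identity, which the proof of Lemma \ref{lem:P} supplies. Second, $g>0$ is real because its radicand is $(z-w+\gamma)^2+4w(\gamma-1)>0$. I would also double-check that no hidden case arises when $\mathcal{M}(w_Q)$ vanishes; this only produces equality and so is consistent with the non-strict $\le$.

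The main obstacle is purely algebraic: verifying the identity $f\mathcal{N}-(z+1)\mathcal{M}=w[\dots]$ on which Step 1 relies. If that identity has a typo, I would instead evaluate $\mathcal{P}(w_M,z)$ from the definition. With $\mathcal{M}(w_M)=0$ we get $\mathcal{Q}(w_M)=-2w_M\mathcal{N}(w_M)$, and therefore
\[
4(\gamma-1)w_M^2\,\mathcal{P}(w_M)=-\mathcal{Q}(w_M)^2=-4w_M^2\mathcal{N}(w_M)^2<0 .
\]
This reaches the same conclusion using only the definitions of $\mathcal{P}$ and $\mathcal{Q}$ together with Lemma \ref{lem:wM-wN}.
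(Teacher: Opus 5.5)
Your proposal is correct and is essentially the paper's proof: $\mathcal{P}(w_M(z),z)<0$ because $\mathcal{M}(w_M(z),z)=0$ and $\mathcal{N}(w_M(z),z)>0$, then $\mathcal{P}(w_Q(z),z)\ge 0$ because $\mathcal{Q}(w_Q(z),z)=0$, and the monotonicity of $\mathcal{P}$ finishes. Your fallback computation via $\mathcal{Q}(w_M,z)=-2w_M\mathcal{N}(w_M,z)$ is exactly the paper's, and your primary route through the identity $(\gamma-1)\mathcal{P}=[\dots]\mathcal{M}-\mathcal{N}^2$ from the proof of Lemma~\ref{lem:P} is an equivalent, correct shortcut. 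The one thing to fix is the opening sentence of Step~2: its stated plan (showing $\mathcal{P}<0$ on $(0,w_Q(z)]$) would give the opposite inequality $w_P>w_Q$, so delete it and keep the argument you give right after it.
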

\begin{proof}
The proof relies mainly on the monotonicity of the function $\mathcal{P}(w,\cdot).$ Notice that 
\begin{align*}
\mathcal{P}(w_M(z),z)=&\frac{1}{4(\gamma-1)w_M(z)^2}\left(\mathcal{M}(w_M(z),z)^2g(w_M(z),z)^2-\mathcal{Q}(w_M(z),z)^2\right)\\
=&-\frac{1}{4(\gamma-1)w_M(z)^2}\mathcal{Q}(w_M(z),z)^2
\end{align*}
by the fact that $\mathcal{M}(w_M(z),z)=0.$ Since the Lemma \ref{lem:wM-wN} and the monotonicity of $\mathcal{N}(w,\cdot)$ imply
\begin{equation*}
\mathcal{N}(w_M(z),z)>\mathcal{N}(w_N(z),z)=0,
\end{equation*}
we have 
\begin{align*}
\mathcal{Q}(w_M(z),z)=&\mathcal{M}(w_M(z),z)f(w_M(z),z)-2w_M(z)\mathcal{N}(w_M(z),z)\\
=&-2w_M(z)\mathcal{N}(w_M(z),z)<0.
\end{align*}
These observations mean that
\begin{equation}\label{PwM}
\mathcal{P}(w_M(z),z)<0,\quad\quad\quad\forall z>0.
\end{equation}

Next, note that
\begin{align*}
\mathcal{P}(w_Q(z),z)=&\frac{1}{4(\gamma-1)w_Q(z)^2}\left(\mathcal{M}(w_Q(z),z)^2g(w_Q(z),z)^2-\mathcal{Q}(w_Q(z),z)^2\right)\\
=&\frac{1}{4(\gamma-1)w_Q(z)^2}\mathcal{M}(w_Q(z),z)^2g(w_Q(z),z)^2
\end{align*}
by the fact that $\mathcal{Q}(w_Q(z),z)=0.$ We obtain
\begin{equation}\label{PwQ}
\mathcal{P}(w_Q(z),z)\geq0,\quad\quad\quad\forall z>0.
\end{equation}

Since the function $\mathcal{P}(w,\cdot )$ is monotonically increasing and 
\begin{equation*}
\mathcal{P}(w_M(z),z)<\mathcal{P}(w_P(z),z)\leq \mathcal{P}(w_Q(z),z)
\end{equation*}
by \eqref{PwM}, \eqref{PwQ} and $\mathcal{P}(w_P(z),z)=0,$  one has
\begin{equation*}
w_M(z)<w_P(z)\leq w_Q(z),\quad\quad\quad\forall z>0.
\end{equation*}
This combined with \eqref{wM-wN} finishes the proof.
\end{proof}

\begin{proof}[The proof of Proposition \ref{prop:main}]
Fixing any $z>0.$ From Lemma \ref{lem:roots}, we can partition the interval  $w\in (0,+\infty)$ into the union of four disjoint subsets:
\begin{equation*}
(0,+\infty)=(0,w_N(z)]\cup (w_N(z),w_M(z)]\cup \mathcal{I}_-(z)\cup\mathcal{I}_+(z),
\end{equation*}
where
\begin{align*}
\mathcal{I}_-(z)=&\{w>w_M(z):\mathcal{Q}(w,z)\leq0\}, \\
\mathcal{I}_+(z)=&\{w>w_M(z):\mathcal{Q}(w,z)>0\}.
\end{align*}

Now, fixing any $z>0,$ we prove $\mathcal{R}(w,z)<0$ for all $w>0$ in these four subsets.
\begin{enumerate}
\item [(a)] For all $w\in (0,w_N(z)],$ we have 
$$\mathcal{M}(w,z)<0,\quad\quad\quad \mathcal{N}(w,z)\leq0$$
by \eqref{wM-wN}. Since $(0,w_N(z)]\subset (0,w_Q(z))$ by Lemma \ref{lem:roots}, it holds that 
\begin{equation*}
\mathcal{Q}(w,z)<0,\quad\quad\quad \forall w\in (0,w_N(z)).
\end{equation*}
As for the function $\mathcal{P}(w,z),$ we have
\begin{equation*}
\mathcal{P}(w,z)<\mathcal{P}(w_P(z),z)=0,\quad\quad\quad \forall w\in (0,w_N(z))
\end{equation*}
by Lemma \ref{lem:P} and Lemma \ref{lem:roots}. Then, by the fact that $\gamma>1$ and $g(w,z)>0,$ one has $\mathcal{M}g+\mathcal{Q}\neq0.$ Hence, it holds that
\begin{equation*}
\mathcal{R}=-\frac{2(\gamma-1)w^2\mathcal{P}}{\mathcal{M}g+\mathcal{Q}}<0
\end{equation*}
by \eqref{relation2}.
\item [(b)] For all $w\in (w_N(z),w_M(z)],$ we have 
$$\mathcal{M}(w,z)\leq0,\quad\quad\quad \mathcal{N}(w,z)>0$$
by Lemma \ref{wM-wN}. Hence, it holds that
\begin{equation*}
\mathcal{R}=\mathcal{M}\mathcal{L}-w\mathcal{N}<0
\end{equation*}
by the fact that $\mathcal{L}(w,z)>0$ for all $w>0,$ $z>0.$
\item [(c)] For all $w\in\mathcal{I}_-(z),$ we have
\begin{equation*}
\mathcal{M}(w,z)>0,\quad\quad\quad\mathcal{Q}(w,z)\leq0
\end{equation*}
by the definition of domain $\mathcal{I}_-(z)$ and Lemma \ref{lem:wM-wN}. Then, by the fact that $g(w,z)>0$ and the relation \eqref{relation1}, we have
\begin{equation*}
\mathcal{R}=\frac12(\mathcal{Q}-\mathcal{M}g)<0.
\end{equation*}
\item [(d)] For all $w\in\mathcal{I}_+(z),$ we have
\begin{equation*}
\mathcal{M}(w,z)>0,\quad\quad\quad\mathcal{Q}(w,z)>0
\end{equation*}
by the definition of domain $\mathcal{I}_+(z)$ and Lemma \ref{lem:wM-wN}. Since $w_Q(z)$ is the first root of $\mathcal{Q}(w,z),$ the inequality $\mathcal{Q}(w,z)>0$ implies $w>w_Q(z)$ for any fixed $z>0.$ Then we have
\begin{equation*}
\mathcal{P}(w,z)>\mathcal{P}(w_Q(z),z)\geq\mathcal{P}(w_P(z),z)=0
\end{equation*}
by the fact that $\mathcal{P}(w,\cdot)$ is monotone increasing and $w>w_Q(z)\geq w_P(z)$ from Lemma \ref{lem:P} and Lemma \ref{lem:roots}. Thus, we have
\begin{equation*}
\mathcal{R}=-\frac{2(\gamma-1)w^2\mathcal{P}}{\mathcal{M}g+\mathcal{Q}}<0
\end{equation*}
by \eqref{relation2}.
\end{enumerate}

\end{proof}

\appendix

\section{Property of $\mathcal{Q}(w,z)$}
In this appendix, wo give the necessary and sufficient conditions for the function $\mathcal{Q}(w,z)$ to be monotonically increasing with respect to $w$.

\begin{lemma}\label{lem:mono-Q}
Suppose that the adiabatic index $\gamma$ satisfies $1<\gamma\leq\frac53.$ Then the function $\mathcal{Q}(w,z)$ is monotonically increasing for $w\in (0,+\infty)$ if and only if one of the following conditions holds:
\begin{enumerate}
\item [(a)] $\gamma\in (1,\gamma_0)$ and $z\in [z_-,z_+];$
\item [(b)] $\gamma\in [\gamma_0,\frac53]$ and $z\in (0,z_+],$
\end{enumerate}
where $\gamma_0$ is the unique zero of the function 
\begin{equation*}
a_0(\gamma)=9\gamma^4-42\gamma^3+46\gamma^2-18\gamma+9=0
\end{equation*}
 on the interval $(1,\frac53],$ and
\begin{equation*}
z_\pm=\frac{12\gamma^2-19\gamma+15\pm \sqrt{3(36\gamma^4-96\gamma^3+179\gamma^2-166\gamma+63)}}{4(\gamma-1)}.
\end{equation*}
\end{lemma}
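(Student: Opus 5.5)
The plan is to reduce the monotonicity of $\mathcal{Q}(\cdot,z)$ to a sign condition on the discriminant of its derivative. That condition is quadratic in $z$, and its constant term is exactly $a_0(\gamma)$. By Lemma \ref{lem:Q},
\begin{equation*}
\partial_w\mathcal{Q}(w,z)=6(\gamma-1)w^2-2q_2(z)w+q_1(z).
\end{equation*}
This is an upward parabola in $w$ with $\partial_w\mathcal{Q}(0,z)=q_1(z)>0$. Its vertex sits at $w=q_2(z)/(6(\gamma-1))$. First I will check that $q_2(z)>0$ for every $z>0$ and $\gamma\in(1,\frac53]$. This reduces to $8\gamma-3\gamma^2-3>0$ on that interval: the function is concave in $\gamma$ and equals $2$ at both endpoints $\gamma=1$ and $\gamma=\frac53$. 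Hence the vertex lies in $(0,\infty)$. So $\mathcal{Q}(\cdot,z)$ is monotonically increasing on $(0,\infty)$ if and only if the discriminant of $\partial_w\mathcal{Q}$ is nonpositive, i.e.
\begin{equation*}
\Delta(z):=q_2(z)^2-6(\gamma-1)q_1(z)\le 0 .
\end{equation*}
If $\Delta=0$, the derivative vanishes at only one point, so $\mathcal{Q}$ is still strictly increasing. If $\Delta>0$, the derivative is negative between two positive roots, which gives alternative (b) of the dichotomy stated before Lemma \ref{lem:roots}.

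Next I expand $\Delta(z)$ as a quadratic in $z$:
\begin{equation*}
\Delta(z)=4(\gamma-1)^2z^2-2(\gamma-1)\bigl(12\gamma^2-19\gamma+15\bigr)z+\bigl[(3\gamma^2-8\gamma+3)^2-6\gamma(\gamma-1)(5-\gamma)\bigr].
\end{equation*}
The linear coefficient comes from $4(\gamma-1)(8\gamma-3\gamma^2-3)-6(\gamma-1)(2\gamma^2-\gamma+3)$. Expanding the constant term gives
\begin{equation*}
9\gamma^4-48\gamma^3+82\gamma^2-48\gamma+9-(-6\gamma^3+36\gamma^2-30\gamma)=a_0(\gamma).
\end{equation*}
The quadratic formula then gives the roots $z_\pm$ as stated; I will verify that the radicand of $\Delta$'s discriminant simplifies to $3(36\gamma^4-96\gamma^3+179\gamma^2-166\gamma+63)$. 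I will also show this radicand is positive on $(1,\frac53]$, for instance by a sum-of-squares or endpoint-plus-derivative argument, so that $z_-<z_+$ are real. Since the leading coefficient is positive, $\Delta(z)\le0$ exactly when $z\in[z_-,z_+]$. The sum of the roots is positive because $12\gamma^2-19\gamma+15>0$, so $z_+>0$ always.

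It remains to intersect $[z_-,z_+]$ with $z>0$. Since $z_-z_+=a_0(\gamma)/(4(\gamma-1)^2)$ and $z_+>0$, we have $z_->0$ iff $a_0(\gamma)>0$, and $z_-\le0$ iff $a_0(\gamma)\le0$. Thus the lemma follows once I show that $a_0$ has a unique zero $\gamma_0\in(1,\frac53]$, with $a_0>0$ on $(1,\gamma_0)$ and $a_0\le0$ on $[\gamma_0,\frac53]$. Existence comes from $a_0(1)=4>0$ and $a_0(\frac53)=-\frac{164}{9}<0$.

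The main obstacle I expect is uniqueness of $\gamma_0$, along with the positivity of the quartic radicand. For uniqueness I would first try to show $a_0'(\gamma)=36\gamma^3-126\gamma^2+92\gamma-18<0$ on $[1,\frac53]$. It is negative at both endpoints ($-16$ and about $-43$). Since $a_0''(\gamma)=108\gamma^2-252\gamma+92$ is negative on this interval (its larger root is about $1.90$), $a_0'$ is decreasing there, so $a_0'<0$ throughout and $a_0$ is strictly decreasing. If that sign check had failed, the fallback would be a Sturm-sequence count of the roots of $a_0$ on $(1,\frac53]$.
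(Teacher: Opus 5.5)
Your proposal is correct and follows essentially the same route as the paper: compute the discriminant of $\partial_w\mathcal{Q}$ as a quadratic in $z$ whose constant term is $a_0(\gamma)$. Then use $z_-+z_+>0$ and $z_-z_+=a_0(\gamma)/(4(\gamma-1)^2)$ to read off the sign of $z_-$ from the sign of $a_0$. Your extra checks fill in steps the paper only asserts or reads off a figure: $q_2(z)>0$ so the vertex lies in $(0,\infty)$ (needed for the ``only if'' direction), the $\Delta=0$ endpoint case, positivity of the quartic radicand, and $a_0''<0$ for uniqueness of $\gamma_0$. One harmless slip: $a_0'(\frac53)=-48$, not about $-43$, which does not affect the argument.
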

\begin{proof}
By the expression of $\mathcal{Q}(w,z)$ in Lemma \ref{lem:Q}, we have
\begin{equation*}
\frac{\partial}{\partial w}\mathcal{Q}(w,z)=6(\gamma-1)w^2-2q_2(z)w+q_1(z).
\end{equation*}
The discriminant of above quadratic equation is 
\begin{align*}
D_2(z)=&4q_2(z)^2-24(\gamma-1)q_1(z) \\
=&4\left[4(\gamma-1)^2z^2-2(\gamma-1)a_1(\gamma)z+a_0(\gamma)\right],
\end{align*}
where
\begin{equation*}
a_1(\gamma)=12\gamma^2-19\gamma+15.
\end{equation*}

Now we prove that the discriminant function $D_2(z)$ has two roots for $z\in \mathbb{R}.$ Since $D_2(z)$ is still a quadratic equation in $z,$ we denote its discriminant as 
\begin{align*}
D_3=&\left[2(\gamma-1)a_1(\gamma)\right]^2-16(\gamma-1)^2a_0(\gamma)\\
=&12(\gamma-1)^2(36\gamma^4-96\gamma^3+179\gamma^2-166\gamma+63)\\
>&0
\end{align*}
for $1<\gamma\leq\frac53$. This implies $D_2(z)$ has two roots $z_\pm$ in $\mathbb{R}.$ However, the domain we discuss is the half space $(0,+\infty).$ Notice that the function $a_0(\gamma)$ is monotone decreasing  and changes sign from positive to negative when $1<\gamma\leq \frac53$ (see Figure \ref{fig:a0}).  We have
\begin{align*}
z_-+z_+=&\frac{a_1(\gamma)}{2(\gamma-1)}>0, \quad\quad\quad\quad \ \gamma\in \left(1,\frac53\right],\\
z_-z_+=&\frac{a_0(\gamma)}{4(\gamma-1)^2}
 \left\{\begin{aligned}
&>0,\quad\quad\quad \gamma\in(1,\gamma_0),  \\
&\leq 0, \quad\quad\quad \gamma\in \left[\gamma_0,\frac53\right],
\end{aligned}\right.
\end{align*}
which means 
\begin{equation*}
\frac{\partial}{\partial w}\mathcal{Q}(w,z)>0,\quad \forall w>0 \quad\quad\quad \Longleftrightarrow \quad\quad\quad z\in [z_-,z_+]\cap(0,+\infty).
\end{equation*}
Hence, when $\gamma\in(1,\gamma_0),$ we have $z_\pm>0,$ and then (a) holds. When $\gamma\in \left[\gamma_0,\frac53\right],$ we have $z_-\leq0<z_+.$ Therefore, we replace interval $(0,z_+]$ with interval $[z_-,z_+]$.
\begin{figure}[htpb]
\centering
\includegraphics[width=\columnwidth]{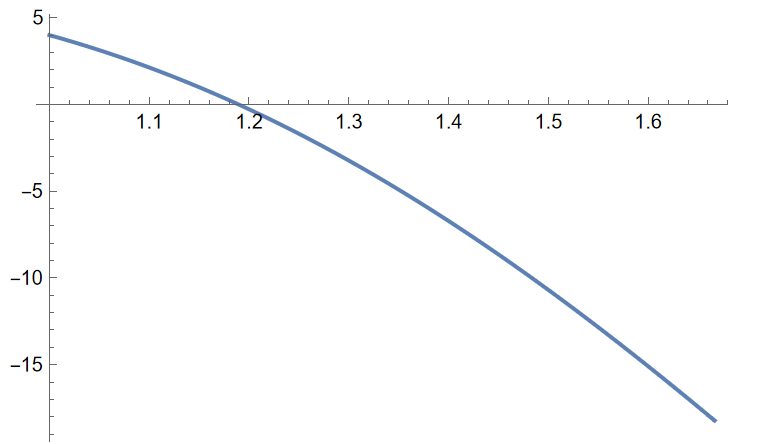} 
\caption{The graph of $a_0(\gamma)$} \label{fig:a0}
\end{figure}
 
\end{proof}

\section{Equation of state for hyperbolic Navier-Stokes equations}

In this appendix, we explain why we should add the correction terms of $q, S$ into the expressions of the pressure $p$ in \eqref{p-}, the internal energy $e$ in \eqref{e-},  and the entropy $s$ in \eqref{s} for hyperbolic Navier-Stokes equations.

%We consider the model with viscosity and heat conduction satisfying Cattaneo's law:
%\begin{equation}\label{NS-CL}
% \left\{\begin{aligned}
%&v_t-u_x=0,  \\
%&u_t+p_x=S_x,\\
%&e_t+pu_x+q_x=Su_x,\\
%&\tau_1q_t+q+\frac{\kappa\theta_x}{v}=0,\\
%&\tau_2S_t+S=\frac{\mu u_x}{v}. 
%\end{aligned}\right.
%\end{equation}

%First we observe that
%\begin{gather*}
%\left(e+\frac12 u^2\right)_t+(up+q-uS)_x=0,\\
%v_t-u_x=0,
%\end{gather*}
%so the specific volume and the total energy both satisfy conservation laws of the form $U_t+f(U)_x=0$. 

We assume that the expressions $p=p(v,\theta,q,S)$ and $e=e(v,\theta,q,S)$ are unknown. 
Let $e^0$, $p^0$ and $s^0$ be the values of $p$ and $e$ at the equilibrium state $(v_\pm,\theta_\pm,q_\pm,S_\pm)=(1,1,0,0),$ i.e.,
\begin{equation*}
p^0=p(1,1,0,0),\quad\quad\quad e^0=e(1,1,0,0),\quad\quad\quad s^0=s(1,1,0,0).
\end{equation*}
Then by the Gibbs equation \eqref{Gibbs}, the function 
\begin{equation*}
\tilde{\eta }(v,u,s)=\hat{e}(v,s,q,S)+\frac12 u^2，
\end{equation*}
with $v,u,s$ as  independent variables, is convex. Here we regard the internal energy $e=\hat{e}(v,s,q,S)$ as a function of $(v,s,q,S).$ Therefore, the non-negative relative entropy 
\begin{equation*}
\eta=\left(e+\frac12 u^2\right)+p^0(v-1)-\left(s-s^0\right)-e^0
\end{equation*}
is obtained by subtracting the first two terms of the Taylor expansion of the convex function 
$\tilde{\eta}(v,u,s)$ from $\tilde{\eta}(v,u,s).$

The non-negative relative entropy $\eta$ satisfies 
\begin{equation}\label{eta-1}
\eta_t+\left(u(p-p^0)+q-uS\right)_x=-s_t
\end{equation}
by \eqref{HNS}. Since the dissipative part in above identity is solely due to the entropy $s_t,$ we calculate $s_t$ in the next step. 

Let $s=\bar{s}(v,e,q,S),$ then from \eqref{HNS}, \eqref{s_v}, \eqref{s_e} we have
\begin{align*}
s_t=&\frac{\partial \bar{s} }{\partial v}v_t+\frac{\partial \bar{s} }{\partial e}e_t+\frac{\partial \bar{s} }{\partial q}q_t+\frac{\partial \bar{s} }{\partial S}S_t \\
=&-\left(\frac{q}{\theta } \right)_x-\frac{q}{\tau_1}\frac{\partial \bar{s} }{\partial q}-\frac{S}{\tau_2}\frac{\partial \bar{s} }{\partial S}-\left(\frac{q}{\theta^2}+\frac{\kappa }{\tau_1v}\frac{\partial \bar{s} }{\partial q} \right)\theta_x+\left(\frac{S}{\theta}+\frac{\mu }{\tau_2v}\frac{\partial \bar{s} }{\partial S} \right)u_x.
\end{align*}
Substituting the above result into \eqref{eta-1}, one has
\begin{multline*}
\eta_t+\left(u(p-p^0)+\left(1-\frac1\theta\right)q-uS \right)_x+\left(-\frac{q}{\tau_1}\frac{\partial \bar{s} }{\partial q}-\frac{S}{\tau_2}\frac{\partial \bar{s} }{\partial S}\right)\\
=\left(\frac{q}{\theta^2}+\frac{\kappa }{\tau_1v}\frac{\partial \bar{s} }{\partial q} \right)\theta_x-\left(\frac{S}{\theta}+\frac{\mu }{\tau_2v}\frac{\partial \bar{s} }{\partial S} \right)u_x.
\end{multline*}
Here $\left(-\frac{q}{\tau_1}\frac{\partial \bar{s} }{\partial q}-\frac{S}{\tau_2}\frac{\partial \bar{s} }{\partial S}\right)$ is the dissipative part and the coefficients $\left(\frac{q}{\theta^2}+\frac{\kappa }{\tau_1v}\frac{\partial \bar{s} }{\partial q} \right) $ 
in front of the first derivative $\theta_x$ and $\left(\frac{S}{\theta}+\frac{\mu }{\tau_2v}\frac{\partial \bar{s} }{\partial S} \right)$ in front of the first derivative $u_x$ must be zero to ensure that the relative entropy $\eta$ is dissipative in the mathematical sense, i.e., 
\begin{equation}\label{s-cond}
\frac{\partial\bar{s}(v,e,q,S) }{\partial q} =-\frac{\tau_1vq}{\kappa\theta^2},\quad\quad\quad \frac{\partial\bar{s}(v,e,q,S) }{\partial q}=-\frac{\tau_2vS}{\mu\theta}.
\end{equation}
Under the assumption \eqref{s-cond}, the entropy $s$ satisfies
\begin{equation*}
s_t+\left(\frac{q}{\theta} \right)_x=\frac{vq^2}{\kappa\theta^2}+\frac{vS^2}{\mu\theta}.
\end{equation*}
and the  non-negative relative entropy $\eta$ satisfies 
\begin{equation*}
\eta_t+\left(u(p-p^0)+\left(1-\frac1\theta\right)q-uS \right)_x+\frac{vq^2}{\kappa\theta^2}+\frac{vS^2}{\mu\theta}=0.
\end{equation*}
Here $\frac{vq^2}{\kappa\theta^2}+\frac{vS^2}{\mu\theta}$ is the dissipative part.

Next, we look for expressions of $p$ and $e$ in terms of the independent variables $q$ and $S$  such that it satisfies assumption \eqref{s-cond}. From condition \eqref{s-cond}, we might guess
\begin{equation*}
s\sim \tilde{s}(v,e)+C_1(v,e)q^2+C_2(v,e)S^2.
\end{equation*}
However, this is not rigorous. The reason is that $\theta=\bar{\theta}(v,e,q,S)$ in the expressions for $\frac{\partial\bar{s} }{\partial q}$ and $\frac{\partial\bar{s} }{\partial S}$ in \eqref{s-cond} also depends on $q$ and $S$. Therefore we consider $(v,\theta,q,S)$ as independent variables. Since
\begin{equation*}
s=s(v,\theta,q,S)=\bar{s}(v,e(v,\theta,q,S),q,S),
\end{equation*}
from \eqref{s_e} and \eqref{s-cond}, we have
\begin{multline}\label{s_q-}
\frac{\partial s(v,\theta,q,S)}{\partial q} =\frac{\partial\bar{s}(v,e,q,S)}{\partial e} \frac{\partial e(v,\theta,q,S)}{\partial q}+\frac{\bar{s}(v,e,q,S)}{\partial q} \\
=\frac{1}{\theta } \frac{\partial e(v,\theta,q,S)}{\partial q}-\frac{\tau_1vq}{\kappa\theta^2}
\end{multline}
and
\begin{multline}\label{s_S-}
\frac{\partial s(v,\theta,q,S)}{\partial S} =\frac{\partial\bar{s}(v,e,q,S)}{\partial e} \frac{\partial e(v,\theta,q,S)}{\partial S}+\frac{\bar{s}(v,e,q,S)}{\partial S} \\
=\frac{1}{\theta } \frac{\partial e(v,\theta,q,S)}{\partial S}-\frac{\tau_2vS}{\mu\theta}.
\end{multline}
Thus, we conjecture that the terms involving $(q,S)$ in $e(v,\theta,q,S)$ are quadratic. Then from the transformation relations of the equations of state, the terms involving $(q,S)$ in $p(v,\theta,q,S)$ are also quadratic. We set
\begin{align}
e(v,\theta,q,S)=&\tilde{e}(v,\theta)+a(v,\theta)q^2+b(v,\theta)S^2, \label{assume-e}\\
p(v,\theta,q,S)=&\tilde{p}(v,\theta)+\alpha(v,\theta)q^2+\beta(v,\theta)S^2.\label{assume-p}
\end{align}
Substituting these into \eqref{s_q-} and \eqref{s_S-} gives
\begin{equation*}
\frac{\partial s(v,\theta,q,S)}{\partial q}=\left(\frac{2a(v,\theta)}{\theta}-\frac{\tau_1v}{\kappa\theta^2} \right)q
\end{equation*}
and 
\begin{equation*}
\frac{\partial s(v,\theta,q,S)}{\partial S}=\left(\frac{2b(v,\theta)}{\theta}-\frac{\tau_2v}{\mu\theta} \right)S.
\end{equation*}
Therefore we can assume the expression for $s(v,\theta,q,S)$ is
\begin{equation}\label{assume-s}
s(v,\theta,q,S)=\tilde{s}(v,\theta)+\left(\frac{a(v,\theta)}{\theta}-\frac{\tau_1v}{2\kappa\theta^2} \right)q^2+\left(\frac{b(v,\theta)}{\theta}-\frac{\tau_2v}{2\mu\theta} \right)S^2.
\end{equation}

Now we determine the coefficients $a(v,\theta),$ $b(v,\theta)$, $\alpha(v,\theta),$ $\beta(v,\theta)$ in three steps. Our main idea is to combine \eqref{assume-e}, \eqref{assume-p} and \eqref{assume-s} with the Gibbs equation \eqref{Gibbs}.

\begin{enumerate}
\item[(a).]  From Gibbs equation \eqref{Gibbs} we obtain 
$$\frac{\partial s(v,\theta,q,S)}{\partial \theta}=\frac1\theta\frac{\partial e(v,\theta,q,S)}{\partial \theta},$$
which, further combined with \eqref{assume-e} and \eqref{assume-s}, implies
\begin{multline*}
\frac{\partial \tilde{s}(v,\theta)}{\partial 
\theta} +\left(\frac{1}{\theta } \frac{\partial a(v,\theta)}{\partial\theta}-\frac{a(v,\theta) }{\theta^2 } +\frac{\tau_1v}{\kappa\theta^3} \right)q^2\\
+\left(\frac{1}{\theta } \frac{\partial b(v,\theta)}{\partial\theta}-\frac{b(v,\theta)}{ \theta^2} +\frac{\tau_2v}{2\mu\theta^2} \right)S^2 \\
=\frac{1}{\theta }\left(\frac{\partial \tilde{e}(v,\theta)}{\partial \theta }+\frac{\partial a(v,\theta)}{\partial\theta }q^2+\frac{\partial b(v,\theta)}{\partial \theta }S^2  \right).
\end{multline*}
Since $v,\theta,q,S$ are independent variables, the coefficients of $q^2$ and $S^2$ on both sides of the above equation are equal, i.e.,
\begin{equation}\label{a-b}
a(v,\theta)=\frac{\tau_1v}{\kappa\theta},\quad\quad b(v,\theta)=\frac{\tau_2v}{2\mu}.
\end{equation}
\item[(b).]  From Gibbs equation \eqref{Gibbs} we obtain
$$\frac{\partial s(v,\theta,q,S)}{\partial v}=\frac{\partial p(v,\theta,q,S)}{\partial \theta } ,$$
which, further combined with \eqref{assume-p} and \eqref{assume-s}, implies
\begin{multline*}
\frac{\partial \tilde{s}(v,\theta)}{\partial v}
+\left(\frac{1}{\theta }\frac{\partial a(v,\theta)}{\partial v}-\frac{\tau_1}{2\kappa\theta^2} \right)q^2+\left(\frac{1}{\theta } \frac{\partial b(v,\theta)}{\partial v}-\frac{\tau_2}{2\mu\theta} \right)S^2\\
=\frac{\partial \tilde{p}(v,\theta)}{\partial \theta } +\frac{\partial \alpha(v,\theta)}{\partial \theta  } q^2+\frac{\partial \beta(v,\theta)}{\partial \theta } S^2.
\end{multline*}
Similarly, equate the coefficients in front of $q^2$ and $S^2$, obtaining
\begin{equation}\label{alpha-beta_theta}
\frac{\partial \alpha(v,\theta)}{\partial \theta }=\frac{\tau_1}{2\kappa\theta^2},\quad\quad\quad \frac{\partial \beta(v,\theta)}{\partial \theta } =0.
\end{equation}
\item[(c).]  From \eqref{gibbs}, \eqref{assume-e} and \eqref{assume-p}, we have
\begin{multline*}
\frac{\partial \tilde{e}(v,\theta)}{\partial v } +\frac{\partial a(v,\theta)}{\partial v} q^2+\frac{\partial b(v,\theta)}{\partial v} S^2 \\
=\left(\theta\frac{\partial \tilde{p}(v,\theta)}{\partial \theta }-\tilde{p}(v,\theta) \right)+
\left(\theta\frac{\partial \alpha(v,\theta)}{\partial \theta }-\alpha(v,\theta) \right)q^2\\
+\left(\theta\frac{\partial \beta(v,\theta)}{\partial \theta }-\beta(v,\theta) \right)S^2.
\end{multline*}
Similarly, equate the coefficients in front of $q^2$ and $S^2$, obtaining
\begin{align*}
\alpha(v,\theta)=&\theta\frac{\partial \alpha(v,\theta)}{\partial \theta } -\frac{\partial a(v,\theta)}{\partial v} =-\frac{\tau_1}{2\kappa\theta} , \\
\beta(v,\theta)=&\theta\frac{\partial \beta(v,\theta)}{\partial \theta } -\frac{\partial b(v,\theta)}{\partial v} =-\frac{\tau_2}{2\mu}
\end{align*}
by \eqref{a-b} and \eqref{alpha-beta_theta}.
\end{enumerate}

From the above analysis, we obtain the following proposition.

\begin{proposition}
For the hyperbolic Navier-Stokes equations \eqref{HNS}, the pressure $p$, internal energy $e$, and entropy $s$ satisfy the relations
\begin{align}
p&=\tilde{p}(v,\theta)-\frac{\tau_1}{2\kappa\theta}q^2-\frac{\tau_2}{2\mu}S^2,   \\
e&=\tilde{e}(v,\theta)+\frac{\tau_1v}{\kappa\theta}q^2+\frac{\tau_2v}{2\mu}S^2,\\
s&=\tilde{s}(v,\theta) +\frac{\tau_1v}{2\kappa\theta^2}q^2.
\end{align}
Here $\tilde{p}(v,\theta), \tilde{e}(v,\theta),\tilde{s}(v,\theta)$ are the pressure, internal energy, and entropy of the Navier-Stokes equations when the relaxation parameters $\tau_1=\tau_2=0$. The relative entropy
\begin{align*}
\eta=&\left(e+\frac12 u^2\right)+p^0(v-1)-\left(s-s^0\right)-e^0\\ 
=&\tilde{\eta}(v,\theta)+\left(1-\frac{1}{2\theta} \right)\frac{\tau_1 v}{\kappa\theta}q^2+\frac{\tau_2v}{2\mu}S^2
\end{align*}
satisfies the equation
\begin{equation}\label{ }
\eta_t+\left(u(p-\tilde{p}(1,1))+\left(1-\frac1\theta\right)q-uS \right)_x+\frac{vq^2}{\kappa\theta^2}+\frac{vS^2}{\mu\theta}=0.
\end{equation}
Here $\tilde{\eta}(v,\theta)$ is the relative entropy when the relaxation parameters $\tau_1=\tau_2=0$.
\end{proposition}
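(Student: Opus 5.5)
The statement to prove is the final Proposition of Appendix B. Its content is essentially a summary of the preceding derivation, so I will assemble it from the three steps (a)–(c) already carried out, together with the ansatz \eqref{assume-e}, \eqref{assume-p}, \eqref{assume-s}.

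\textbf{Step 1: the state functions.} First I insert \eqref{a-b} into \eqref{assume-e}, which gives the stated $e$ directly. For the pressure I combine the two results of step (c) with \eqref{assume-p}, namely $\alpha=-\frac{\tau_1}{2\kappa\theta}$ and $\beta=-\frac{\tau_2}{2\mu}$. As a consistency check, this $\alpha$ satisfies $\partial_\theta\alpha=\frac{\tau_1}{2\kappa\theta^2}$, in agreement with \eqref{alpha-beta_theta}, and $\beta$ is independent of $\theta$. For the entropy I substitute $a$ and $b$ into \eqref{assume-s}. The $q^2$ coefficient becomes $\frac{\tau_1 v}{\kappa\theta^2}-\frac{\tau_1 v}{2\kappa\theta^2}=\frac{\tau_1 v}{2\kappa\theta^2}$. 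The $S^2$ coefficient becomes $\frac{\tau_2 v}{2\mu\theta}-\frac{\tau_2 v}{2\mu\theta}=0$. This yields the stated $s$, which matches \eqref{s}.

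\textbf{Step 2: the relative entropy.} Next I compute $\eta$ by plugging these formulas into its definition. The $q$ and $S$ contributions are
\[
\frac{\tau_1 v}{\kappa\theta}q^2-\frac{\tau_1 v}{2\kappa\theta^2}q^2=\Bigl(1-\frac{1}{2\theta}\Bigr)\frac{\tau_1 v}{\kappa\theta}q^2
\]
and $\frac{\tau_2 v}{2\mu}S^2$. The remaining terms collect into $\tilde\eta$. This uses $p^0=\tilde p(1,1)$, $e^0=\tilde e(1,1)$ and $s^0=\tilde s(1,1)$, which hold because $q=S=0$ at equilibrium.

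\textbf{Step 3: the entropy balance.} With $s$ now known, I verify \eqref{s-cond}. Here I will use the paper's own computations \eqref{s_q} and \eqref{s_S}, which hold for exactly these state functions and coincide with \eqref{s-cond}. In the displayed $s_t$ identity the $\theta_x$ and $u_x$ coefficients then vanish, and the dissipative term becomes $\frac{vq^2}{\kappa\theta^2}+\frac{vS^2}{\mu\theta}$. This is the same as Lemma \ref{lem:s}, so I can simply cite that lemma. Substituting into \eqref{eta-1} gives the stated equation for $\eta$.

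\textbf{Main obstacle.} The only delicate point is legitimacy rather than computation. The ansatz that the $(q,S)$-dependence is quadratic, as in \eqref{assume-e}–\eqref{assume-p}, is an assumption and not a derivation. I therefore plan to state the proposition as holding within that class. I will also record that the typo in the second condition of \eqref{s-cond}, which should read $\partial\bar s/\partial S$, is what is actually used.
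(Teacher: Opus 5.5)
Your proposal is correct and follows essentially the same route as the paper: you insert the coefficients from steps (a)--(c) into the quadratic ansatz \eqref{assume-e}, \eqref{assume-p}, \eqref{assume-s}, compute $\eta$, and obtain its balance law from \eqref{eta-1} once \eqref{s-cond} holds. Citing Lemma \ref{lem:s} instead of redoing the $s_t$ identity is equivalent and not circular, because that lemma only presupposes the forms \eqref{p-}--\eqref{e-}, and your two caveats (that the result holds only within the quadratic ansatz, and that the second condition in \eqref{s-cond} should read $\partial\bar{s}/\partial S$) are both accurate.
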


\section*{Acknowledgments}
Qingsong Zhao was supported by the National Natural Science Foundation of China under Grant Number 12401281.

\end{document}